\newcommand{\medint}{-\kern  -,375cm\int}
\theoremstyle{plain}
\newtheorem{theorem}{Theorem}[section]
\newtheorem{lemma}[theorem]{Lemma}
\theoremstyle{definition}
\theoremstyle{remark}
\theoremstyle{plain}
\numberwithin{equation}{section} \makeatletter
\renewcommand{\p@enumi}{\thesection.}
\makeatother \pagestyle{myheadings} \allowdisplaybreaks
\title[Existence and regularity results of weak solutions]{Existence and regularity results for weak solutions to $(p,q)$-elliptic systems in divergence form}
\author[ M.~Bul\'{\i}\v{c}ek]{Miroslav Bul\'{\i}\v{c}ek}
\address{Charles University, Faculty of Mathematics and Physics, Mathematical Institute, Sokolovsk\'{a} 83, 186~75, Prague, Czech Republic}
\email{mbul8060@karlin.mff.cuni.cz}
\author[G.~Cupini]{Giovanni Cupini}
\address{Dipartimento di Matematica,
  Universit\`{a} di Bologna, Piazza di Porta S.Donato 5, 40126  Bologna, Italy }
\email{giovanni.cupini@unibo.it}
\author[B.~Stroffolini]{Bianca Stroffolini}
\address{Dipartimento di Matematica e Applicazioni, Universit\`{a} di Napoli ''Federico II", via Cintia, 80126 Napoli, Italy}
\email{bstroffo@unina.it}
\author[A.~Verde]{Anna Verde}
\address{Dipartimento di Matematica e Applicazioni, Universit\`{a} di Napoli "Federico II", via Cintia, 80126 Napoli, Italy}
\email{anverde@unina.it}
 \subjclass[2000]{Primary: 35J47; Secondary: 35J25}
 \keywords{Elliptic system, existence of solutions, $(p,q)$-growth conditions, regularity}
\thanks{M.~Bul\'{\i}\v{c}ek's work was supported by the ERC-CZ project LL1202 financed by the Ministry of Education, Youth and Sports, Czech Republic. M.~Bul\'{\i}\v{c}ek
is a member of the Ne\v{c}as center for Mathematical Modeling.   The other authors are members of the Gruppo Nazionale per l'Analisi
Matematica, la Probabilit\`a e le loro Applicazioni (GNAMPA) of the
Istituto Nazionale di Alta Matematica (INdAM)}
\begin{document}

\begin{abstract}
We prove existence and regularity results for weak solutions of non linear elliptic systems with non variational structure satisfying
$(p,q)$-growth conditions. In particular we are able to prove higher differentiability results under a dimension-free gap between $p$ and $q$.
\end{abstract}

\maketitle



\section{Introduction}\label{S:1}

In this paper we focus on the existence and the regularity results for solutions $u$ to the  Dirichlet
problems associated with the following  nonlinear system  in divergence form (here $\alpha=1,\ldots,N$)
\begin{equation}\label{dirichlet}
\left\{\begin{array}{ll}
\displaystyle \sum_{i=1}^n \frac{\partial}{\partial x_i} A_{i}^{\alpha}(Du)=0 &\text{in $ \Omega$}
\\
u=u_0 &\text{on $ \partial \Omega$,}
\end{array}\right.
\end{equation}
where  the functions $A_i^{\alpha}(\xi)$  are locally Lipschitz continuous in $\mathbb{R}^{nN}$,  $\Omega$ is an open bounded subset of
$\mathbb{R}^n$ and $Du:\Omega \to \mathbb{R}^{nN}$ represents the gradient of a (vector-valued) function $u:\Omega \to \mathbb{R}^N$.

We equip the problem with the general $(p,q)$-growth conditions, i.e., we assume that there are $1< p\le q<\infty$ and  two positive constants
$m, M$ such that for all $\xi, \lambda\in \mathbb{R}^{nN}$  and for all  $i,j=1,\ldots,n$, and  $\alpha,\beta=1,\ldots,N$ there holds
\begin{align}\label{ellitticita1}
m(1+|\xi|^2)^{\frac{p-2}{2}}|\lambda|^2&\le
\sum_{i,j=1}^n\sum_{\alpha,\beta=1}^N \frac{\partial A_{i}^{\alpha}}{\partial \xi^{\beta}_j}(\xi)\lambda^{\alpha}_i\lambda_j^{\beta},\\
\label{crescita-q}
 \left|\frac{\partial A_{i}^{\alpha}}{\partial \xi_j^{\beta}}(\xi)\right|&\le  M (1+|\xi|^2)^{\frac{q-2}{2}}.
\end{align}
Notice that \eqref{ellitticita1} is the usual ellipticity condition and  \eqref{crescita-q} is the $q$-growth condition,
from which the name of $(p,q)$-growth come from.  Under these assumptions, one can easily observe (see Lemma \ref{stimaLp}) that $|A^{\alpha}_i(\xi)|\le C(1+|\xi|)^{q-1}$ with some generic constant $C$ and therefore we can naturally define a notion of a weak solution to \eqref{dirichlet} in the following way:

Let $u_0\in W^{1,p}(\Omega; \mathbb{R}^N)\cap W^{1,q}_{\rm loc}(\Omega;\mathbb{R}^N)$. We say that $u$ is a weak solution to \eqref{dirichlet} if
\begin{equation}\label{soldebole-Dirichlet1}
 u-u_0 \in W_0^{1,p} (\Omega;\mathbb{R}^N)\cap W_{\textrm{loc}}^{1,q} (\Omega;\mathbb{R}^N)
\end{equation}
and for all open $\Omega'$ fulfilling $\overline{\Omega'}\subset \Omega$ and for all $\varphi \in W_0^{1,q}(\Omega';\mathbb{R}^N)$ there holds
\begin{equation}\label{soldebole-Dirichlet}
\int_{\Omega}\sum_{i=1}^{n}\sum_{\alpha=1}^{N}A_{i}^{\alpha}(Du)\varphi^{\alpha}_{x_i}(x) \,dx=0.
\end{equation}
Here, and also in what follows, we use the abbreviation $\varphi^{\alpha}_{x_i}:=\frac{\partial \varphi^{\alpha}}{\partial x_i}$

Our main task in the paper is to establish the existence of such a solution and further some regularity of arbitrary weak  solutions.
However, contrary to the classical result, we do not in general assume any symmetry condition on the derivative of $A^{\alpha}_i$ and so we do not assume that the system is in variational form.
Nevertheless, as done in \cite{mar91}  in  the scalar framework,
we will need to compensate this lack of symmetry by the following assumption on the asymptotic behavior of the skew-symmetric part, namely,
 for all $\xi, \lambda\in \mathbb{R}^{nN}$  and for all  $i,j=1,\ldots,n$, and $\alpha,\beta=1,\ldots,N$ there holds
\begin{equation}\label{continuita}
\left|\frac{\partial A_{i}^{\alpha}}{\partial \xi_j^{\beta}}(\xi)-\frac{\partial A_{j}^{\beta}}{\partial \xi_i^{\alpha}} (\xi)\right|\le M
(1+|\xi|^2)^{\frac{q+p-4}{4}}.
\end{equation}

If $p = q$, the existence of weak solutions to (\ref{dirichlet}) can be established using the theory of coercive,
monotone operators, see Leray--Lions \cite{lerlio65}, Browder \cite{browder} and Hartman--Stampacchia \cite{{hart-stamp}}. Also the regularity issue has been extensively studied, see the
monographs \cite{gia2}, \cite{giusti} and the surveys \cite{Mingione} and \cite{mingione2}. Notice also, that without any further additional structural assumptions, the best\footnote{This information can be as usual slightly improved by the Gehring lemma} known regularity information about
the solution is that $V(Du)\in W^{1,2}_{\rm loc}(\Omega; \mathbb{R}^{nN})$, where
\begin{equation}\label{defV}
V(\xi):=(1+|\xi|^2)^{\frac{p-2}{4}}\xi.
\end{equation}
On the other hand, if $p < q$ the above classical existence results cannot be applied due to the lack of coercivity
in $W^{ 1,q}$ . Moreover, the request $u\in W^{1,q }_{\rm loc}(\Omega; \mathbb{R}^N)$ in the definition of weak solution, needed to
have a well defined integral, is an additional difficulty. Notice that such a request is a
priori assumed in some regularity results under the $p, q$-growth, see for example  \cite{leonetti1}, \cite{bilfuchsCalcVar} and  \cite{cupmarmas4}.

The first result of the paper is that any weak solution is in fact twice weakly differentiable.
\begin{theorem}\label{T:main-weak}
Let $1<p\le q<\infty$ be arbitrary and  $A$ satisfy \eqref{ellitticita1}, \eqref{crescita-q} and \eqref{continuita}.
Then any $u\in W^{1,\max\{q,2\}}_{\rm loc}(\Omega; \mathbb{R}^N)$ fulfilling \eqref{soldebole-Dirichlet} satisfies
for all $\eta\in C_{c}^{\infty}(\Omega)$ the following estimate
\begin{equation}
\int_{\Omega}(1+|Du|^2)^{\frac{p-2}{2}}
 |D^2u|^2\eta^{2}\, dx
\le c\int_{\Omega}
(1+|Du|^2)^{\frac{q}{2}} |D\eta|^2 \,dx,
\label{exdopoH3-1}
\end{equation}
where the constant $c$ depends only on $m$ and $M$. In particular, we also have that
\begin{equation}
\int_{\Omega}|D V(Du)|^2\eta^{2}\, dx
\le c\int_{\Omega}
(1+|Du|^2)^{\frac{q}{2}} |D\eta|^2 \,dx.
\label{exdopoH3-V}
\end{equation}
\end{theorem}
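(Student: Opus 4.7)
The plan is to apply Nirenberg's method of difference quotients. For $\eta\in C_c^\infty(\Omega)$ and a direction $e_s$, pick $h$ small enough that $\varphi:=\Delta_{-h}^s(\eta^2\Delta_h^s u)\in W_0^{1,q}(\Omega';\mathbb{R}^N)$ for some open $\Omega'$ with $\overline{\Omega'}\subset\Omega$; the hypothesis $u\in W^{1,\max\{q,2\}}_{\rm loc}$ ensures this is admissible in \eqref{soldebole-Dirichlet}. A discrete integration by parts, combined with the averaged representation
\[
\Delta_h^s A_i^\alpha(Du)=\bar a_{ij}^{\alpha\beta}\,\Delta_h^s u_{x_j}^\beta,\qquad \bar a_{ij}^{\alpha\beta}(x):=\int_0^1\frac{\partial A_i^\alpha}{\partial \xi_j^\beta}\bigl(Du(x)+t\,\Delta_h^s Du(x)\bigr)\,dt,
\]
leads to the Caccioppoli-type identity
\[
\int_\Omega \bar a_{ij}^{\alpha\beta}\,\Delta_h^s u_{x_j}^\beta\,\Delta_h^s u_{x_i}^\alpha\,\eta^2\,dx = -\,2\int_\Omega \bar a_{ij}^{\alpha\beta}\,\Delta_h^s u_{x_j}^\beta\,\Delta_h^s u^\alpha\,\eta\,\eta_{x_i}\,dx.
\]
By \eqref{ellitticita1} the left-hand side is bounded below by $c\,m\int(1+|Du|^2+|\Delta_h^s Du|^2)^{(p-2)/2}|\Delta_h^s Du|^2\eta^2\,dx$.

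The core task is to bound the right-hand side by $C\int(1+|Du|^2)^{q/2}|D\eta|^2\,dx$ plus a small fraction of the left-hand side. I would split $\bar a=\bar s+\bar w$ into its symmetric and skew-symmetric parts under the index swap $(i,\alpha)\leftrightarrow(j,\beta)$. The skew part inherits the sharper bound $|\bar w|\le M(1+|Du|^2+|\Delta_h^s Du|^2)^{(p+q-4)/4}$ from \eqref{continuita}; writing $(p+q-4)/4=(p-2)/4+(q-2)/4$ and applying Young's inequality, its contribution splits as $\varepsilon\int(1+\ldots)^{(p-2)/2}|\Delta_h^s Du|^2\eta^2\,dx$ (absorbed) plus $C_\varepsilon\int(1+\ldots)^{(q-2)/2}|\Delta_h^s u|^2|D\eta|^2\,dx$. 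For the symmetric part, the skew component vanishes on the diagonal, so \eqref{ellitticita1} upgrades to $\bar s[\lambda,\lambda]\ge m(1+\ldots)^{(p-2)/2}|\lambda|^2$: the bilinear form $\bar s$ is positive semidefinite on $\mathbb{R}^{nN}$, which unlocks the Cauchy--Schwarz inequality $|\bar s[K,L]|\le \bar s[K,K]^{1/2}\bar s[L,L]^{1/2}$ applied to $K=\eta\,\Delta_h^s Du$ and $L=D\eta\otimes \Delta_h^s u$. Combined with Young and the upper bound \eqref{crescita-q}, it produces an absorbable $\bar s[K,K]$ term and a bad term dominated by $M(1+\ldots)^{(q-2)/2}|D\eta|^2|\Delta_h^s u|^2$.

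Once the good terms have been absorbed, I would divide by $h^2$ and pass to the limit $h\to 0$. Since $u\in W^{1,\max\{q,2\}}_{\rm loc}$, the quotients $\Delta_h^s u/h$ converge to $u_{x_s}$ in $L^q_{\rm loc}$ (and a.e.\ along a subsequence), so the right-hand side passes to the limit by Vitali while the left-hand side is handled by Fatou. Summing over $s=1,\ldots,n$ and using $(1+|Du|^2)^{(q-2)/2}|Du|^2\le(1+|Du|^2)^{q/2}$ delivers \eqref{exdopoH3-1}. The companion estimate \eqref{exdopoH3-V} follows at once from the pointwise chain-rule bound $|DV(Du)|^2\le c(1+|Du|^2)^{(p-2)/2}|D^2u|^2$ coming from \eqref{defV}.

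The main difficulty is preventing the right-hand side from growing like $(1+|Du|^2)^{(2q-p)/2}$, which is what a blunt Cauchy--Schwarz on the full $\bar a$ (using only \eqref{crescita-q}) would yield. This is precisely where the asymptotic symmetry hypothesis \eqref{continuita} compensates for the absence of a variational structure: by forcing the non-variational part of $\partial A/\partial \xi$ into the geometric-mean growth regime $(p+q-4)/4$, it permits the clean Young splitting above without any dimensional penalty, and so produces the dimension-free gap between $p$ and $q$ announced in the introduction.
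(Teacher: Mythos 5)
Your proposal is correct and follows essentially the same route as the paper: a difference-quotient Caccioppoli identity, a symmetric/skew decomposition of the linearised bilinear form with Cauchy--Schwarz on the positive-definite symmetric part and \eqref{continuita} plus Young on the skew part (the paper packages exactly this algebra into Lemma~\ref{Tolsk} applied pointwise in $t$, whereas you average the coefficients in $t$ first and work with $\bar a$ directly), followed by a Fatou/dominated-convergence passage to the limit. The only cosmetic differences are the choice of test function ($\Delta_{-h}^s(\eta^2\Delta_h^s u)$ versus $\tau^2\Delta_h^s u$ with a discrete integration by parts, which yield the same identity) and that the paper handles the cases $q\le 2$ and $q>2$ separately in the limit, which your Vitali argument subsumes.
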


The above theorem provides the existence of the second derivatives for arbitrary $1<p\le q<\infty$ but the right hand side of \eqref{exdopoH3-1}
or \eqref{exdopoH3-V} still depends on the $W^{1,q}$ norm of $u$. We shall improve this estimate provided that $p$ and $q$
are sufficiently close to each other. Thus, the second main theorem of the paper is the following.
\begin{theorem}
\label{t:main}
Let $1<p\le q<\infty$ be arbitrary and  $A$ satisfy \eqref{ellitticita1}, \eqref{crescita-q} and \eqref{continuita} and
$u \in W^{1,\max\{q,2\}}_{\rm loc}(\Omega;\mathbb{R}^N)$ satisfy \eqref{soldebole-Dirichlet}.
Then for all open $\Omega' \subset \overline{\Omega'} \subset \Omega$ the following holds:
\begin{itemize}
\item[i)] If
 \begin{equation}\label{ipotesip-q}
q<p\frac{n+2}{n}
\end{equation}
then
$$
\int_{\Omega'}\left(
|V(Du)|^{\frac{2q}{p}}+|D V(Du)|^2 + (1+|D u|^2)^{\frac{p-2}{2}}|D^2u|^2\right)\, dx \le C(\Omega', n, N,  p, q,  m, M, \|Du\|_{L^p(\Omega)}).
$$
\item[ii)]
If $u\in L^{\infty}(\Omega;\mathbb{R}^N)$ and
 \begin{equation}\label{ipotesip-qb}
q<p+2 \qquad \textrm{and} \qquad p<n
\end{equation}
then
\begin{equation*}
 \begin{split}
\int_{\Omega'}&\left(|V(Du)|^{\frac{2q}{p}}+|D V(Du)|^2 + (1+|D u|^2)^{\frac{p-2}{2}}|D^2u|^2\right)\, dx
\\ &\qquad \qquad\qquad \qquad\qquad \qquad\qquad \qquad \le C(\Omega', n, N, p, q, m, M, \|Du\|_{L^p(\Omega)},\|u\|_{L^{\infty}(\Omega)}).
   \end{split}
\end{equation*}
%
\end{itemize}
In particular, in both cases we have that $V(Du)\in W^{1,2}_{\rm loc}(\Omega;\mathbb{R}^{nN})$,
which, due to the embedding theorem,  leads to $Du\in  L^{\frac{p2^*}{2}}_{\rm loc}(\Omega;\mathbb{R}^{nN})$.
\end{theorem}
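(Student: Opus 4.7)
Since $|V(\xi)|^{2q/p}+1\simeq(1+|\xi|^2)^{q/2}$ and Theorem~\ref{T:main-weak} already controls both $|DV(Du)|^2$ and $(1+|Du|^2)^{(p-2)/2}|D^2u|^2$ by $\int(1+|Du|^2)^{q/2}|D\eta|^2\,dx$, the whole theorem reduces to a single quantitative local estimate: bound $\int_{\Omega'}(1+|Du|^2)^{q/2}\,dx$ by the data allowed in each case. The final statement about $Du\in L^{p2^*/2}_{\rm loc}$ then follows from one further application of Theorem~\ref{T:main-weak} combined with the Sobolev embedding $W^{1,2}\hookrightarrow L^{2^*}$ applied to $V(Du)$. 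In what follows I fix concentric balls $B_r\subset B_R\subset\Omega$ with a cutoff $\eta\in C_c^\infty(B_R)$, $\eta\equiv 1$ on $B_r$, $|D\eta|\le 2/(R-r)$.

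For case (i), the strategy is to combine Theorem~\ref{T:main-weak} with the Sobolev embedding of $V(Du)\eta$ to obtain, with $\mu:=pn/(n-2)$,
\[
\|1+|Du|\|_{L^\mu(B_r)}^p \le \frac{C}{(R-r)^2}\|1+|Du|\|_{L^q(B_R)}^q.
\]
Then I interpolate $L^q$ between $L^p$ and $L^\mu$; a direct computation gives the exponent of the $L^\mu(B_R)$-factor as $(1-\theta)q=n(q-p)/2$, which is strictly less than $p$ precisely because $q<p(n+2)/n$. Young's inequality absorbs this term into $\tfrac12\|1+|Du|\|_{L^\mu(B_R)}^p$ and leaves a residual depending only on $\|Du\|_{L^p(\Omega)}$ and on a negative power of $R-r$. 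A standard iteration lemma on nested balls (e.g.\ Giusti's Lemma~6.1) then yields the sought $L^\mu_{\rm loc}$-bound on $Du$, which in particular controls the $L^q$-norm.

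Case (ii) loses the Sobolev bootstrap but gains the $L^\infty$-bound on $u$, which is exploited through an integration-by-parts trick on
\[
\int(1+|Du|^2)^{q/2}\eta^{2\sigma}\,dx\simeq\int(1+|Du|^2)^{(q-2)/2}\partial_i u^\alpha\,\partial_i u^\alpha\,\eta^{2\sigma}\,dx,
\]
moving one derivative off $\partial_i u^\alpha$ onto $u^\alpha$ (taking $\sigma\ge 2$ so the weights on $\eta$ stay well behaved). The principal resulting term is $C\|u\|_{L^\infty}\int(1+|Du|)^{q-2}|D^2u|\,\eta^{2\sigma}\,dx$, which after a Cauchy--Schwarz split of the integrand as $(1+|Du|)^{(p-2)/2}|D^2u|\cdot(1+|Du|)^{q-2-(p-2)/2}$ and an invocation of Theorem~\ref{T:main-weak} with the cutoff $\eta^{\sigma-1}$ becomes a product of $\bigl(\int(1+|Du|)^{q}\eta^{\ldots}\bigr)^{1/2}$ and $\bigl(\int(1+|Du|)^{2q-p-2}\eta^{\ldots}\bigr)^{1/2}$. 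The hypothesis $q<p+2$ means $2q-p-2<q$, so H\"older bounds the second factor by a sublinear power of the target $\int(1+|Du|)^q\eta^{2\sigma}\,dx$. Collecting exponents produces a self-improving inequality whose overall exponent on the right equals $(3q-p-2)/(2q)<1$ (again equivalent to $q<p+2$), and the iteration lemma closes the argument.

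The main obstacle, especially in case (ii), is to choose the powers of $\eta$ so that every cutoff appearing along the way is an admissible test in Theorem~\ref{T:main-weak} and that the final self-improving exponent lies strictly below $1$. The two gap conditions emerge with explicit algebraic sharpness: $n(q-p)/2<p\Leftrightarrow q<p(n+2)/n$ for (i), and $(3q-p-2)/(2q)<1\Leftrightarrow q<p+2$ for (ii). The auxiliary assumption $p<n$ in (ii) enters only at the very end, to make the exponent $2^*$ finite in the concluding embedding, and plays no role in producing the $L^q$-bound itself.
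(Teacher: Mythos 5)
Your outline captures the paper's essential strategy in both cases: (i) uses the Sobolev embedding of $V(Du)\tau$ plus an interpolation of $L^q$ between $L^p$ and $L^{p2^*/2}$, and the gap $q<p(n+2)/n$ surfaces exactly as $n(q-p)/2<p$; (ii) integrates by parts against $u$ to bring in $\|u\|_\infty$, then Cauchy--Schwarz, Theorem~\ref{T:main-weak}, and H\"older interpolation, with $q<p+2$ making the interpolation exponents subcritical. Where you diverge is the closing step: you run a hole-filling/iteration lemma on nested balls, whereas the paper instead takes $\tau=\eta^\gamma$ with $\gamma$ tailored so that the singular weight ($|D\tau|^2/\tau^{2^*\theta}$ in (i), $|D\tau|^{2-\theta_2}/\tau$ in (ii)) collapses pointwise to a power of $|D\eta|$, letting Young's inequality absorb on a single domain with no iteration. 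Both routes work; the iteration-lemma route does require the iterated quantity to be finite a priori, and while this does hold here --- Theorem~\ref{T:main-weak} already yields $V(Du)\in W^{1,2}_{\rm loc}$, hence $Du\in L^{p2^*/2}_{\rm loc}$ qualitatively --- you should say so explicitly, since it is not a consequence of $u\in W^{1,\max\{q,2\}}_{\rm loc}$ alone. Two smaller points: the self-improving exponent you compute in (ii), $(3q-p-2)/(2q)$, does not match a direct interpolation, which gives $(3q-3p-2)/\bigl(2(q-p)\bigr)$, though both are strictly below $1$ iff $q<p+2$, so the sharpness claim is unaffected; and $p<n$ has nothing to do with $2^*$ being finite (that exponent $2n/(n-2)$ depends only on $n$, and is taken arbitrarily large when $n=2$). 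Rather, $p<n$ is exactly the regime where $p+2>p(n+2)/n$, i.e., where case (ii) admits exponents $q$ not already covered by case (i). Finally, you only treat the principal term from the integration by parts; the companion term containing $|D^2\tau|$ needs its own interpolation with $\theta_1=(q-1-p)/(q-p)<\theta_2<1$, which the paper handles in \eqref{godot2}.
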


Finally, we state our last main result of the paper. It is an  existence result
 for the Dirichlet problem  \eqref{dirichlet}. For this purpose, we need
 to consider a regularity assumption on the boundary  datum.
We shall require in what follows that
\begin{equation}\label{ipotesi-dato}
  u_0 \in W^{1,r} (\Omega;\mathbb{R}^N), \quad \textrm{with}\,\, r:=\max\left\{2,\frac{p(q-1)}{p-1}\right\}.
\end{equation}
\begin{theorem}
\label{t:main2}
Let $1<p\le q<\infty$ be arbitrary and  $A$ satisfy \eqref{ellitticita1}, \eqref{crescita-q}
and \eqref{continuita}. Moreover, let $u_0$ fulfill \eqref{ipotesi-dato}. Then there exists a weak solution to the problem \eqref{dirichlet} provided that at least one of the following conditions hold
\begin{itemize}
\item[i)] $p$ and $q$ satisfy \eqref{ipotesip-q}.
\item[ii)] $p$ and $q$ satisfy \eqref{ipotesip-qb},  $u_0\in L^{\infty}(\partial \Omega;\mathbb{R}^N)$ {and
\begin{equation}\label{structure}
  \sum_{i=1}^{n}A_{i}^{\alpha}(\xi)\xi_i^{\alpha}\ge 0, \ \  \forall\xi\in \mathbb{R}^{nN},\  \ \forall \alpha\in \{1,\ldots,N\}.
\end{equation}}
\end{itemize}
\end{theorem}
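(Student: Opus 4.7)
The plan is to solve a family of regularized problems with $(q,q)$-growth, for which classical monotone-operator theory applies, and then pass to the limit via the $\varepsilon$-independent a priori estimates provided by Theorem~\ref{t:main}. Concretely, for $\varepsilon\in(0,1)$ I set
$$
A_i^{\varepsilon,\alpha}(\xi):=A_i^\alpha(\xi)+\varepsilon\,(1+|\xi|^2)^{\frac{q-2}{2}}\,\xi_i^\alpha,
$$
which inherits from $A$ the ellipticity \eqref{ellitticita1}, the growth \eqref{crescita-q}, the asymptotic-symmetry condition \eqref{continuita}, and (trivially) the structural condition \eqref{structure} where it is assumed. The added term makes $A^\varepsilon$ coercive on $W^{1,q}$ and endows the operator with standard $(q,q)$-growth. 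Since $A$ (and hence $A^\varepsilon$) is strictly monotone by integration of \eqref{ellitticita1}, the Browder--Leray--Lions theory yields a weak solution $u^\varepsilon\in u_0+W^{1,q}_0(\Omega;\mathbb{R}^N)$ of the corresponding regularized Dirichlet problem.

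Next I would derive $\varepsilon$-independent estimates. Testing with $u^\varepsilon-u_0$ and invoking the ellipticity inequality $A(\xi)\cdot\xi\ge c|\xi|^p-C$ together with Young's inequality, calibrated precisely via the exponent $r=p(q-1)/(p-1)$ of \eqref{ipotesi-dato}, gives a uniform $W^{1,p}(\Omega)$ bound on $u^\varepsilon$. In case~(ii) I would additionally derive a uniform $L^\infty$ bound by testing each component equation with truncations $((u^\varepsilon)^\alpha-M)^+$ (and the analogous negative-part truncation) for $M\ge\|u_0\|_{L^\infty}$: the structural condition \eqref{structure}, which is preserved by the regularization, makes the integrand pointwise nonnegative on the super-level set and, combined with ellipticity, forces $|\{(u^\varepsilon)^\alpha>M\}|=0$. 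Feeding these uniform bounds into Theorem~\ref{t:main} yields, on every $\Omega'\subset\subset\Omega$, $\varepsilon$-independent control of $|V(Du^\varepsilon)|^{2q/p}$, $|DV(Du^\varepsilon)|^2$ and $(1+|Du^\varepsilon|^2)^{(p-2)/2}|D^2u^\varepsilon|^2$. In particular, $\{Du^\varepsilon\}$ is uniformly bounded in $L^q_{\rm loc}(\Omega)$ and $\{V(Du^\varepsilon)\}$ in $W^{1,2}_{\rm loc}(\Omega)$.

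Finally I would pass to the limit. Weak compactness extracts a subsequence with $u^\varepsilon\rightharpoonup u$ in $W^{1,p}(\Omega)$ and in $W^{1,q}_{\rm loc}(\Omega)$, and Rellich's theorem gives strong convergence of $V(Du^\varepsilon)$ in $L^s_{\rm loc}$ for every $s<2^*$, hence $Du^\varepsilon\to Du$ almost everywhere along a further subsequence. The $\varepsilon$-perturbation vanishes in $L^{q/(q-1)}_{\rm loc}$ since its $L^{q/(q-1)}_{\rm loc}$-norm is bounded by $\varepsilon$ times an $\varepsilon$-independent constant. By Vitali's theorem, together with equi-integrability arising from $|Du^\varepsilon|^{pn/(n-2)}\in L^1_{\rm loc}$ uniformly, $A(Du^\varepsilon)\to A(Du)$ strongly in $L^{q/(q-1)}_{\rm loc}$, which is enough to pass to the limit in the weak formulation for every test $\varphi\in W^{1,q}_0(\Omega';\mathbb{R}^N)$. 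The boundary condition $u-u_0\in W^{1,p}_0(\Omega;\mathbb{R}^N)$ follows by weak closure of $W^{1,p}_0$.

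The main obstacle is precisely the limit passage in the nonlinear term $A(Du^\varepsilon)$: it demands equi-integrability of $(1+|Du^\varepsilon|)^{q-1}$ in $L^{q/(q-1)}_{\rm loc}$. In case~(i) the Sobolev embedding of $V(Du^\varepsilon)\in W^{1,2}_{\rm loc}$ produces $Du^\varepsilon\in L^{pn/(n-2)}_{\rm loc}$ with $pn/(n-2)>p(n+2)/n>q$, so the required slack above $q$ is automatic. In case~(ii) the gap $q<p+2$ can exceed $pn/(n-2)$ when $n\ge p+2$, and here the $L^\infty$-bound is indispensable and enters through Theorem~\ref{t:main}(ii); if the Sobolev slack is not available I would fall back on a Minty-type monotonicity argument, using \eqref{ellitticita1} and the already-established strong convergence of $V(Du^\varepsilon)$, to upgrade the a.e.\ convergence of $Du^\varepsilon$ to a form sufficient for the limit passage in the weak formulation.
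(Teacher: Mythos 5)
Your proposal follows essentially the same approach as the paper — regularize by a higher-order perturbation, invoke monotone-operator theory for the regularized problems, derive $\varepsilon$-uniform $W^{1,p}$ (and in case (ii) $L^\infty$) bounds by testing with $u^\varepsilon-u_0$ and truncations, feed these into the higher-differentiability machinery, and pass to the limit along a subsequence. Two technical points are worth flagging, though.

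First, your perturbation $\varepsilon(1+|\xi|^2)^{(q-2)/2}\xi$ breaks down when $q<2$: the regularized operator is then only coercive on $W^{1,q}$, while Theorem~\ref{T:main-weak} requires the solution to lie in $W^{1,\max\{q,2\}}_{\rm loc}=W^{1,2}_{\rm loc}$ before the difference-quotient argument can even start. The paper avoids this by regularizing with exponent $\max\{q,2\}$, so that $u^\varepsilon\in W^{1,\max\{q,2\}}(\Omega)$ automatically; you should adopt the same exponent. Second, your worry about equi-integrability in $L^{q/(q-1)}_{\rm loc}$ in case (ii), and the proposed Minty fallback, are unnecessary. One does not need $A(Du^\varepsilon)\to A(Du)$ in $L^{q/(q-1)}_{\rm loc}$: pass to the limit first for $\varphi\in C^\infty_c(\Omega;\mathbb{R}^N)$, where $D\varphi\in L^\infty$ so only $L^1_{\rm loc}$-convergence of $A(Du^\varepsilon)$ is needed — and that follows from a.e.\ convergence of $Du^\varepsilon$, the uniform $L^q_{\rm loc}$ bound on $Du^\varepsilon$ (whence $(1+|Du^\varepsilon|)^{q-1}$ is equi-integrable in $L^1_{\rm loc}$ by de la Vallée Poussin), and Vitali's theorem. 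Since $A(Du)\in L^{q/(q-1)}_{\rm loc}$, the identity then extends to all $\varphi\in W^{1,q}_0(\Omega')$ by density. This is exactly what the paper does, and it needs no Sobolev slack above $q$.
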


As far as the regularity of solutions is concerned, the obstructions are essentially two: we are
dealing with systems and under non-standard growth $(p < q)$. Indeed, in the vectorial
case, even under the standard growth, the everywhere regularity of solutions for systems, or of
minimizers of integrals, cannot be expected unless some structure conditions are assigned, and this
holds also for the local boundedness, see e.g. the counterexamples by De Giorgi \cite{deg}  and \v{S}ver\'ak-Yan
\cite{sverak}.
Since the pioneering paper by Marcellini \cite{mar89}, the theory of regularity in the framework of non-
standard growth has been deeply investigated. The results and the contributions to regularity are
so many, that it is a hard task to provide a comprehensive overview of the issue. For this, we refer
to the survey of Mingione \cite{Mingione} for an accurate and interesting account on this subject. A common
feature is that to get regularity results $p$ and $q$ must be not too far apart, as examples of irregular
solutions by Giaquinta \cite{gia}, Marcellini \cite{mar87} and Hong \cite{hong} show. On the other hand, many regularity results are available if
the ratio $q/p$ is bounded above by suitable constant that in general depends on the dimension $n$, and converges to $1$ when
$n$ tends to infinity (\cite{bil03}, \cite{CarKriPas}, \cite {espoleomin1}, \cite {espoleomin2}, \cite {espoleomin3}).
Moreover, the condition on the distance between the exponents
$p$ and $q$ can usually be relaxed if the solutions/minimizers are assumed locally bounded.

Let us observe that the
local higher differentiability results for bounded minimizers of  integral functionals satisfying $p, q$-growth conditions is more studied
than the analogous issue for systems of PDE's.
In particular, recently, the Authors, in \cite{CKP},  considered integral functionals
with convex integrand satisfying $p, q$-growth conditions. They proved local higher differentiability results for bounded minimizers
under dimension-free conditions on the gap between the growth and the coercivity exponents; i.e., \eqref{ipotesip-qb} restricted to the case $p\geq 2$,
using an improved Gagliardo-Nirenberg's inequality. We  also observe that  an existence result  in the $(p,q)$-framework was proved in  \cite{CupLeoMas}  for  a Dirichlet problem \eqref{dirichlet} with monotone  operators possibly  depending on the $x$-variable, but for $p\ge 2$  only. As a novel feature, the main results are achieved through uniform higher differentiability estimates for solutions to a class of
auxiliary problems, constructed adding higher order perturbations to the integrand.
Here we achieve the same result for systems with non variational structure with control on the skew-symmetric part (see (\ref{continuita})).

\medskip
The plan of the paper is the following. In Section \ref{s:preliminaries} we prove some
preliminar algebraic  inequalities. In Sections \ref{apriori} and \ref{P-t:main} we prove the higher differentiability results  Theorem \ref{T:main-weak} and
Theorem \ref{t:main}, respectively. In the last section, we prove the existence result (Theorem \ref{t:main}) for the  problem \eqref{dirichlet}.



\section{Auxiliary algebraic inequalities}\label{s:preliminaries}

In this part, we recall several algebraic inequalities related to the mapping $A$. Although, their proof can be in some simplified setting found in many works, see e.g. \cite[Lemma~4.4, Lemma 2.4]{mar91}, \cite[Lemma~1]{Tolksdorf}, \cite[Lemma 5.1]{cupmarmas4} or \cite[Chapter~5]{mnrr96}, we provide for the sake of clarity  a detailed proof here.
We start with the first auxiliary result based on the assumptions \eqref{ellitticita1}--\eqref{crescita-q}.
\begin{lemma} \label{stimaLp}
Let $A:\mathbb{R}^{nN}\to \mathbb{R}^{nN}$ be a continuous mapping fulfilling \eqref{ellitticita1} and \eqref{crescita-q}. Then there exists a positive constant $K$ such that for all $\xi, \eta \in \mathbb{R}^{nN}$
there hold
\begin{align}\label{dis-ellitticita}
 |\xi|^p &\leq  K\left\lbrace (1+|\eta|^2)^{\frac{p(q-1)}{2(p-1)}} + \sum_{i=1}^{n} \sum_{\alpha=1}^NA_i^{\alpha}(\xi)(\xi^{\alpha}_i-\eta_i^{\alpha})\right\rbrace,\\
 |A^{\alpha}_i(\xi)| &\leq K (1+|\xi|^2)^{\frac{q-1}{2}} \qquad \textrm{ for all } \alpha=1,\ldots, N \textrm{ and } i=1,\ldots,n. \label{crescitaai}\\
 \label{casop>2}
|\xi-\eta|^p&\le K\sum_{i=1}^{n} \sum_{\alpha=1}^{N}\left( A_i^{\alpha}(\xi)-A_i^{\alpha}(\eta)\right)  (\xi_i^{\alpha}-\eta_i^{\alpha}) \quad \textrm{for }p\ge 2,\\
\label{casop<2}
\left( 1+|\xi|^2+|\eta|^2\right)^{\frac{p-2}{2}}|\xi-\eta|^2  &\le K\sum_{i=1}^{n} \sum_{\alpha=1}^{N}\left(A_i^{\alpha}(\xi)-A_i^{\alpha}(\eta)\right)  (\xi^{\alpha}_i-\eta^{\beta}_i) \quad \textrm{for } p \in (1,2).
\end{align}
\end{lemma}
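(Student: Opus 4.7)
The plan is to derive all four inequalities from two elementary building blocks: the pointwise growth bound \eqref{crescitaai} and the \emph{master monotonicity} estimate
\[
\sum_{i,\alpha}\bigl(A_i^{\alpha}(\xi)-A_i^{\alpha}(\eta)\bigr)(\xi_i^{\alpha}-\eta_i^{\alpha}) \;\geq\; c\,(1+|\xi|^2+|\eta|^2)^{\frac{p-2}{2}}|\xi-\eta|^2.
\]
For \eqref{crescitaai}, I would integrate along the ray, writing $A_i^{\alpha}(\xi)=A_i^{\alpha}(0)+\int_{0}^{1}\sum_{j,\beta}\frac{\partial A_i^{\alpha}}{\partial \xi_j^{\beta}}(t\xi)\,\xi_j^{\beta}\,dt$, using \eqref{crescita-q} to bound the integrand by $M|\xi|(1+t^2|\xi|^2)^{(q-2)/2}$, and finishing by a one-dimensional computation.

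For the master monotonicity estimate, I would apply the fundamental theorem of calculus along the segment $t\mapsto \eta+t(\xi-\eta)$, use the ellipticity \eqref{ellitticita1} with $\lambda=\xi-\eta$, and close the bound with the classical algebraic lemma
\[
\int_{0}^{1}\bigl(1+|\eta+t(\xi-\eta)|^2\bigr)^{\frac{p-2}{2}}\,dt \;\geq\; c(p)\,(1+|\xi|^2+|\eta|^2)^{\frac{p-2}{2}},
\]
proved by distinguishing the signs of $p-2$ and exploiting that $|\eta+t(\xi-\eta)|$ is comparable to $\max\{|\xi|,|\eta|\}$ on a subinterval of uniformly positive measure. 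Once this master inequality is in hand, \eqref{casop<2} is immediate, and \eqref{casop>2} follows from it together with the elementary bound $(1+|\xi|^2+|\eta|^2)^{(p-2)/2}\geq c|\xi-\eta|^{p-2}$, valid for $p\geq 2$ because $|\xi-\eta|^2\leq 2(|\xi|^2+|\eta|^2)$.

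The coercivity bound \eqref{dis-ellitticita} is obtained by the splitting
\[
\sum_{i,\alpha} A_i^{\alpha}(\xi)(\xi_i^{\alpha}-\eta_i^{\alpha}) = \sum_{i,\alpha}\bigl(A_i^{\alpha}(\xi)-A_i^{\alpha}(\eta)\bigr)(\xi_i^{\alpha}-\eta_i^{\alpha}) + \sum_{i,\alpha} A_i^{\alpha}(\eta)(\xi_i^{\alpha}-\eta_i^{\alpha}).
\]
The master inequality bounds the first piece below, and a short case analysis ($|\eta|\leq |\xi|/2$ versus $|\eta|>|\xi|/2$) yields $(1+|\xi|^2+|\eta|^2)^{(p-2)/2}|\xi-\eta|^2 \geq c|\xi|^p - C|\eta|^p - C$. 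The second piece is bounded in absolute value, via \eqref{crescitaai}, by $K(1+|\eta|^2)^{(q-1)/2}(|\xi|+|\eta|)$: the factor involving $|\xi|$ is handled by Young's inequality with exponents $p$ and $p'=p/(p-1)$, producing a term $\varepsilon|\xi|^p$ that is absorbed on the left and a remainder $C(\varepsilon)(1+|\eta|^2)^{p(q-1)/(2(p-1))}$, which is exactly the exponent appearing in \eqref{dis-ellitticita}; the factor involving $|\eta|$ combines into $(1+|\eta|^2)^{q/2}$, controlled by the same power since $p\leq q$ is equivalent to $q/2\leq p(q-1)/(2(p-1))$.

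The main obstacle is the calibration of the Young exponents in the last step, where one must land the dual exponent $p(q-1)/(p-1)=(q-1)p'$ without wasting the $|\xi|^p$ that gets absorbed on the left-hand side. The algebraic lemma underlying the master monotonicity in the degenerate case $p<2$ also deserves care, since the integrand becomes nearly singular whenever the segment $t\mapsto \eta+t(\xi-\eta)$ passes close to the origin.
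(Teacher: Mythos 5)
Your proposal is correct and follows essentially the same route as the paper: integration along rays from the origin for \eqref{crescitaai}, the fundamental theorem of calculus along the segment plus ellipticity \eqref{ellitticita1} plus the classical one-dimensional bound $\int_0^1(1+|\eta+t(\xi-\eta)|^2)^{(p-2)/2}\,dt\geq c(1+|\xi|^2+|\eta|^2)^{(p-2)/2}$ (the content of the lemma the paper cites from \cite{mnrr96}) for \eqref{casop>2}--\eqref{casop<2}, and the split $\sum A_i^{\alpha}(\xi)(\xi_i^{\alpha}-\eta_i^{\alpha})=\sum(A_i^{\alpha}(\xi)-A_i^{\alpha}(\eta))(\xi_i^{\alpha}-\eta_i^{\alpha})+\sum A_i^{\alpha}(\eta)(\xi_i^{\alpha}-\eta_i^{\alpha})$ with Young's inequality calibrated to the dual exponent $p(q-1)/(p-1)$ for \eqref{dis-ellitticita}. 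The only small variation is that you treat $p\geq 2$ and $1<p<2$ uniformly via the pointwise bound $(1+|\xi|^2+|\eta|^2)^{(p-2)/2}|\xi-\eta|^2\geq c|\xi|^p-C|\eta|^p-C$ (proved by the case split $|\eta|\lessgtr|\xi|/2$), whereas the paper starts from $|\xi|^p\leq c(|\xi-\eta|^p+|\eta|^p)$ and, in the degenerate case $p<2$, converts $|\xi-\eta|^p$ to the monotone form by Young's inequality with exponents $2/p$ and $2/(2-p)$ — both give the same result.
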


\begin{proof}
We start the proof with \eqref{crescitaai}. Since
\begin{equation}\label{Nap1}
A^{\alpha}_i(\xi)-A^{\alpha}_i(0)=\int_0^1 \sum_{j=1}^n \sum_{\beta=1}^N\frac{\partial A^{\alpha}_i(t\xi)}{\partial \xi^{\beta}_j}\xi^{\beta}_j\; dt,
\end{equation}
we can use the assumption \eqref{crescita-q}, to get
$$
\left|A^{\alpha}_i(\xi) \right|\le \left|A^{\alpha}_i(0)\right| + M \int_0^1 \sum_{j=1}^n \sum_{\beta=1}^N(1+t^2|\xi|^2)^{\frac{q-2}{2}}
|\xi^{\beta}_j|\; dt\le \left|A^{\alpha}_i(0)\right| + M nN\int_0^1 (1+t^2|\xi|^2)^{\frac{q-2}{2}} |\xi|\; dt.
$$
Thus, in case $q\ge 2$, the inequality \eqref{crescitaai} immediately follows.

If $q\in (1,2)$ we can continue with estimating the last integral in the following way
$$
\int_0^1 (1+t^2|\xi|^2)^{\frac{q-2}{2}} |\xi|\; dt = \int_0^{|\xi|} (1+t^2)^{\frac{q-2}{2}} \; dt \le 2^{\frac{2-q}{2}}\int_0^{|\xi|} (1+t)^{q-2}
\; dt \le \frac{2^{\frac{2-q}{2}}}{q-1}(1+|\xi|)^{q-1}
$$
and we again see that \eqref{crescitaai} follows directly.

To show \eqref{casop>2}--\eqref{casop<2}, we write
$$
\begin{aligned}
&\sum_{i=1}^{n}\sum_{\alpha=1}^{N}\left( A_i^{\alpha}(\xi)-A_i^{\alpha}(\eta)\right)  (\xi_i^{\alpha}-\eta_i^{\alpha}) =\int_0^1 \sum_{i,j=1}^{n}\sum_{\alpha=1}^{N} \frac{\partial A^{\alpha}_i(t\xi + (1-t)\eta)}{\partial \xi^{\beta}_j} ( \xi_j^{\beta}-\eta_j^{\beta})  (\xi_i^{\alpha}-\eta_i^{\alpha})\; dt\\
&\overset{\eqref{ellitticita1}}\ge m|\xi-\eta|^2\int_0^1 (1+|t\xi+(1-t)\eta|^2)^{\frac{p-2}{2}}\; dt.
\end{aligned}
$$
Then, following step by step proof of Lemma~1.19 in \cite[Chapter~5]{mnrr96}, we deduce  \eqref{casop>2}--\eqref{casop<2}.

To  show \eqref{dis-ellitticita}, we first consider the case $p \geq 2$. Then by using \eqref{casop>2} and \eqref{crescitaai} and also the Young's inequality, we can observe that for all  $\epsilon >0$ and all  $\xi,\eta \in \mathbb{R}^{nN}$, we have
\begin{align*}
&|\xi|^p \leq c(|\xi- \eta|^p +|\eta|^p) \leq c\left\{
\sum_{i=1}^{n}\sum_{\alpha=1}^{N}  (A_i^{\alpha}(\xi)-A_i^{\alpha}(\eta)) (\xi_i^{\alpha}-\eta_i^{\alpha})+|\eta|^p\right\}
\\ &\le
 c \left\lbrace |\eta|^p+  \sum_{i=1}^{n}\sum_{\alpha=1}^{N}  A_i^{\alpha}(\xi) (\xi_i^{\alpha}-\eta_i^{\alpha})+
\bar{C}(1+|\eta|^2)^{\frac{q-1}{2}}(|\xi|+|\eta|) \right\rbrace \\& \le
 c \left\lbrace (1+|\eta|^2)^{\frac{p}{2}}+\sum_{i=1}^{n} \sum_{\alpha=1}^{N}   A_i^{\alpha}(\xi) (\xi_i^{\alpha}-\eta_i^{\alpha})+
c_{\epsilon}(1+|\eta|^2)^{\frac{p(q-1)}{2(p-1)}}+\epsilon  (|\xi|+|\eta|)^p \right\rbrace;
 \end{align*}
thus if $\epsilon$ is  small enough we get  \eqref{dis-ellitticita}.

In the case $1<p<2$, we proceed slightly differently. By using the Young 's  inequality with complementary exponents $\frac{2}{p}$ and $ \frac{2}{2-p}$ we get  for $\epsilon >0$
\begin{align*}
|\xi|^p &\leq c\left(|\xi- \eta|^p +|\eta|^p\right) \le
 c\left( |\eta|^p+ (|\xi- \eta|^2)^{\frac{p}{2}}
 (1+|\xi|^2 +|\eta|^2)^{ \frac{p(p-2)}{4} + \frac{p(2-p)}{4}}\right)
\\ &\le
c \left\{(1+|\eta|^2)^{\frac{p}{2}}+ c_{\epsilon}
(1+ |\xi|^2+|\eta|^2)^{\frac{p-2}{2}}|\xi-\eta|^2+\epsilon (1+ |\xi|^2+|\eta|^2)^{\frac{p}{2}}\right\}.
\end{align*}
Therefore, by \eqref{casop<2}, with a proper choice of (small)  $\epsilon>0$,  we get
\[
|\xi|^p \leq c \left\{ (1+|\eta|^2)^{\frac{p}{2}} +\sum_{i=1}^{n} \sum_{\alpha=1}^{N}
(A_i^{\alpha}(\xi)-A_i^{\alpha}(\eta)) (\xi_i^{\alpha}-\eta_i^{\alpha})
 \right\}
\]
and we conclude by proceeding as above.
\end{proof}

The following estimate will play a crucial role for getting the information about the second derivatives of the weak solutions to \eqref{soldebole-Dirichlet}.
\begin{lemma}\label{Tolsk}
Let $A$ be a continuous mapping fulfilling \eqref{ellitticita1}, \eqref{crescita-q} and \eqref{continuita}. Then there exists a positive constant $K$ such that for all $\xi,\eta,\zeta\in \mathbb{R}^{nN}$ we have
\begin{equation}\label{key-3}
\begin{aligned}
\frac{m}{2}(1+|\zeta|^2)^{\frac{p-2}{2}}|\xi|^2 & \le \sum_{i,j=1}^n \sum_{\alpha,\beta=1}^N \frac{\partial A^{\alpha}_i(\zeta)}{\partial \zeta^{\beta}_j}(\xi^{\alpha}_i-\eta^{\alpha}_i)\xi^{\beta}_j+K(1+|\zeta|^2)^{\frac{q-2}{2}}|\eta|^2.
\end{aligned}
\end{equation}
\end{lemma}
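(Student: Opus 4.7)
The plan is to decompose the tensor $\partial_{\zeta^{\beta}_j}A^{\alpha}_i(\zeta)$ into its symmetric part $S^{\alpha\beta}_{ij}$ and its skew-symmetric part $W^{\alpha\beta}_{ij}$ under the swap $(i,\alpha)\leftrightarrow(j,\beta)$. The reason this split is useful is that (\ref{crescita-q}) only gives the $q$-growth bound on the full tensor (and hence on $S$), while the structural assumption (\ref{continuita}) provides on $W$ the strictly smaller exponent $(q+p-4)/4$; this gap is exactly what lets us reach the mixed form of (\ref{key-3}).

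First, I would rewrite the left-hand side as $Q(\xi,\xi)-Q(\eta,\xi)$, where $Q$ denotes the bilinear form of the full tensor. On the diagonal the skew part vanishes, so $Q(\xi,\xi)=S(\xi,\xi)$, and ellipticity (\ref{ellitticita1}) both supplies the lower bound $m(1+|\zeta|^2)^{(p-2)/2}|\xi|^2$ and, crucially, certifies $S$ as a positive semi-definite symmetric form on $\mathbb{R}^{nN}$.

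Next, I would split the cross term as $Q(\eta,\xi) = S(\eta,\xi)+W(\eta,\xi)$. For $S$, since it is symmetric and positive semi-definite, the generalized Cauchy--Schwarz inequality gives
\[
|S(\eta,\xi)| \le S(\eta,\eta)^{1/2}\,S(\xi,\xi)^{1/2},
\]
and (\ref{crescita-q}) yields $S(\eta,\eta)\le C(1+|\zeta|^2)^{(q-2)/2}|\eta|^2$; Young's inequality with a small parameter $\epsilon$ then produces an absorbable piece $\epsilon\,Q(\xi,\xi)$ plus a remainder $C_{\epsilon}(1+|\zeta|^2)^{(q-2)/2}|\eta|^2$ of the desired shape. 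For $W$ the assumption (\ref{continuita}) gives
\[
|W(\eta,\xi)| \le \tfrac{M}{2}\bigl[(1+|\zeta|^2)^{(p-2)/4}|\xi|\bigr]\bigl[(1+|\zeta|^2)^{(q-2)/4}|\eta|\bigr],
\]
since the exponent factorizes cleanly as $(p-2)/4+(q-2)/4$; another Young's inequality splits this into an absorbable piece $\epsilon(1+|\zeta|^2)^{(p-2)/2}|\xi|^2$ and again a term of the form $C_{\epsilon}(1+|\zeta|^2)^{(q-2)/2}|\eta|^2$. Choosing $\epsilon$ small enough to leave a coefficient at least $m/2$ in front of the ellipticity lower bound yields (\ref{key-3}).

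The only delicate point is the justification of the Cauchy--Schwarz step, which requires the symmetric part to be positive semi-definite; this is immediate from ellipticity since $W$ kills itself on the diagonal, so the coercivity bound transfers verbatim from the full tensor to $S$. Apart from this, the argument is just an algebraic juggle of two Young's inequalities tuned with the same $\epsilon$.
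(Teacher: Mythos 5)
Your proof is correct and follows essentially the same approach as the paper: both decompose the Jacobian of $A$ into its symmetric and skew parts, use ellipticity \eqref{ellitticita1} to certify the symmetric part as a positive semi-definite bilinear form, control the symmetric cross term via Cauchy--Schwarz (or, equivalently, by dropping a nonnegative square), exploit the intermediate exponent $\tfrac{q+p-4}{4}$ of \eqref{continuita} for the skew cross term, and close with Young's inequality. If anything, your organization --- bounding $Q(\eta,\xi)$ directly and absorbing $\epsilon\,Q(\xi,\xi)$ into the $Q(\xi,\xi)$ on the right rather than starting from the identity $(\xi,\xi)_\zeta=-(\xi-\eta,\xi-\eta)_\zeta+2(\xi,\xi-\eta)_\zeta+(\eta,\eta)_\zeta$ as the paper does --- is slightly tidier and recovers the stated constant $m/2$ cleanly, without the residual factor of $2$ that the paper's displayed chain leaves in front of the leading term.
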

\begin{proof}
For arbitrary $\zeta, \xi, \eta \in \mathbb{N}$, we define a bilinear form (for fixed $\zeta$)
$$
(\xi,\eta)_{\zeta}:=\frac12\sum_{i,j=1}^n \sum_{\alpha,\beta=1}^N\left(\frac{\partial A^{\alpha}_i(\zeta)}{\partial \zeta^{\beta}_j}+\frac{\partial A^{\beta}_j(\zeta)}{\partial \zeta^{\alpha}_i}\right)\eta^{\alpha}_i\xi^{\beta}_j.
$$
 Trivially  $(\xi,\eta)_{\zeta}=(\xi,\eta)_{\zeta}$.
Moreover, using  the assumption \eqref{ellitticita1} we get that
$$
(\xi,\xi)_{\zeta}=\sum_{i,j=1}^n \sum_{\alpha,\beta=1}^N\frac{\partial A^{\alpha}_i(\zeta)}{\partial \zeta^{\beta}_j}\xi^{\alpha}_i\xi^{\beta}_j\ge m(1+|\zeta|^2)^{\frac{p-2}{2}}|\xi|^2,
$$
and consequently, we see that for any fixed $\zeta$, the relation $(\xi,\eta)_{\zeta}$ is a scalar product on $\mathbb{R}^{nN}$ and therefore the Cauchy--Schwarz inequality holds, i.e.,
\begin{equation}
|(\xi,\eta)_{\zeta}|\le (\xi,\xi)^{\frac12}_{\zeta}(\eta,\eta)^{\frac12}_{\zeta}. \label{CSc}
\end{equation}
Thus, by  assumption \eqref{ellitticita1} and taking into account that
\[\sum_{i,j=1}^n \sum_{\alpha,\beta=1}^N \left(\frac{\partial A^{\alpha}_i(\zeta)}{\partial \zeta^{\beta}_j}-
\frac{\partial A^{\beta}_j(\zeta)}{\partial \zeta^{\alpha}_i}\right)\xi^{\alpha}_i\xi^{\beta}_j=0,\]
 we have
$$
\begin{aligned}
m&(1+|\zeta|^2)^{\frac{p-2}{2}}|\xi|^2 \le (\xi,\xi)_{\zeta} = -(\xi-\eta,\xi-\eta)_{\zeta}+ 2(\xi,\xi-\eta)_{\zeta} + (\eta,\eta)_{\zeta}\le
2(\xi,\xi-\eta)_{\zeta} + (\eta,\eta)_{\zeta}\\
&=\sum_{i,j=1}^n \sum_{\alpha,\beta=1}^N \left(\frac{\partial A^{\alpha}_i(\zeta)}{\partial \zeta^{\beta}_j}+
\frac{\partial A^{\beta}_j(\zeta)}{\partial \zeta^{\alpha}_i}\right)(\xi^{\alpha}_i-\eta^{\alpha}_i)\xi^{\beta}_j+
\sum_{i,j=1}^n \sum_{\alpha,\beta=1}^N \frac{\partial A^{\alpha}_i(\zeta)}{\partial \zeta^{\beta}_j}\eta^{\alpha}_i\eta^{\beta}_j\\
&=2\sum_{i,j=1}^n \sum_{\alpha,\beta=1}^N \frac{\partial A^{\alpha}_i(\zeta)}{\partial \zeta^{\beta}_j}(\xi^{\alpha}_i-
\eta^{\alpha}_i)\xi^{\beta}_j-\sum_{i,j=1}^n \sum_{\alpha,\beta=1}^N \left(\frac{\partial A^{\alpha}_i(\zeta)}{\partial \zeta^{\beta}_j}-
\frac{\partial A^{\beta}_j(\zeta)}{\partial \zeta^{\alpha}_i}\right)(\xi^{\alpha}_i-\eta^{\alpha}_i)\xi^{\beta}_j\\
&\qquad +\sum_{i,j=1}^n \sum_{\alpha,\beta=1}^N \frac{\partial A^{\alpha}_i(\zeta)}{\partial \zeta^{\beta}_j}\eta^{\alpha}_i \eta^{\beta}_j\\
&=2\sum_{i,j=1}^n \sum_{\alpha,\beta=1}^N \frac{\partial A^{\alpha}_i(\zeta)}{\partial \zeta^{\beta}_j}(\xi^{\alpha}_i-\eta^{\alpha}_i)\xi^{\beta}_j +
\sum_{i,j=1}^n \sum_{\alpha,\beta=1}^N \left(\frac{\partial A^{\alpha}_i(\zeta)}{\partial \zeta^{\beta}_j}-\frac{\partial A^{\beta}_j(\zeta)}{\partial \zeta^{\alpha}_i}
\right)\eta^{\alpha}_i\xi^{\beta}_j\\
&\qquad +\sum_{i,j=1}^n \sum_{\alpha,\beta=1}^N \frac{\partial A^{\alpha}_i(\zeta)}{\partial \zeta^{\beta}_j}\eta^{\alpha}_i \eta^{\beta}_j\\
&\overset{\eqref{crescita-q},\eqref{continuita}}\le 2\sum_{i,j=1}^n \sum_{\alpha,\beta=1}^N
\frac{\partial A^{\alpha}_i(\zeta)}{\partial \zeta^{\beta}_j}(\xi^{\alpha}_i-\eta^{\alpha}_i)\xi^{\beta}_j
+MnN(1+|\zeta|^2)^{\frac{q+p-4}{4}}|\eta||\xi|+MnN(1+|\zeta|^2)^{\frac{q-2}{2}}|\eta|^2.
\end{aligned}
$$
 Taking into account that
\[(1+|\zeta|^2)^{\frac{q+p-4}{4}}|\eta||\xi|=\left((1+|\zeta|^2)^{\frac{p-2}{4}}|\xi|\right)
\left((1+|\zeta|^2)^{\frac{q-2}{4}}|\eta|\right)\]
and using  the Young's inequality, we get
\[m(1+|\zeta|^2)^{\frac{p-2}{2}}|\xi|^2
\le 2\sum_{i,j=1}^n \sum_{\alpha,\beta=1}^N \frac{\partial A^{\alpha}_i(\zeta)}{\partial \zeta^{\beta}_j}
(\xi^{\alpha}_i-\eta^{\alpha}_i)\xi^{\beta}_j+\frac{m}{2}(1+|\zeta|^2)^{\frac{p-2}{2}}|\xi|^2+C(1+|\zeta|^2)^{\frac{q-2}{2}}|\eta|^2\] for a suitable constant $C$.
 Then, \eqref{key-3} easily follows.
\end{proof}

\section{Proof of Theorem~\ref{T:main-weak}}\label{apriori}

We proceed via difference quotients technique. Due to the assumed regularity of the solution $u$ and thanks to \eqref{crescitaai}, it follows from \eqref{soldebole-Dirichlet} that
$$
\int_{\Omega}\sum_{i=1}^n \sum_{\alpha=1}^N (A^{\alpha}_i(Du(x+he_k))-A^{\alpha}_i(Du(x)))\varphi^{\alpha}_{x_i}\; dx =0,
$$
for all $\varphi\in W^{1,q}_0(\Omega_h;\mathbb{R}^N)$, all $h\in (0,1)$ and all $k=1,\ldots, n$, where $\Omega_h:=\{x\in \Omega: \; B_{2h}(x)\subset \Omega\}$ and $e_k$ is a unit vector in the $k$-th direction. Hence, setting
$$
\varphi(x):=(u(x+he_k)-u(x))\tau^2(x)
$$
with $\tau \in \mathcal{C}^{\infty}_c(\Omega_{2h})$ (which is an admissible choice), we obtain the starting identity
\begin{equation}\label{start}
\begin{split}
0&=\int_{\Omega}\sum_{i=1}^n \sum_{\alpha=1}^N (A^{\alpha}_i(Du(x+he_k))-A^{\alpha}_i(Du(x)))\tau(x)\cdot\\
&\qquad \cdot \left((u^{\alpha}_{x_i}(x+he_k)-u^{\alpha}_{x_i}(x))\tau(x)+2(u^{\alpha}(x+he_k)-u^{\alpha}(x))\tau_{x_i}
\right)\; dx.
\end{split}
\end{equation}
Since
$$
\begin{aligned}
&A^{\alpha}_i(Du(x+he_k))-A^{\alpha}_i(Du(x))\\
&=\int_0^t\sum_{j=1}^n \sum_{\beta=1}^N \int_0^1 \frac{\partial A^{\alpha}_i(tDu(x+he_k)+(1-t)Du(x))}{\partial \zeta^{\beta}_j}(u^{\beta}_{x_j}(x+he_k)-u^{\beta}_{x_j}(x))\; dt,
\end{aligned}
$$
the identity \eqref{start} can be equivalently rewritten as
\begin{equation}\label{start2}
\begin{split}
0&=\int_{\Omega}\sum_{i,j=1}^n \sum_{\alpha,\beta=1}^N\int_0^1 \frac{\partial A^{\alpha}_i(tDu(x+he_k)+(1-t)Du(x))}{\partial \zeta^{\beta}_j}(u^{\beta}_{x_j}(x+he_k)-u^{\beta}_{x_j}(x))\tau(x)\cdot\\
&\qquad \cdot \left((u^{\alpha}_{x_i}(x+he_k)-u^{\alpha}_{x_i}(x))\tau(x)+2(u^{\alpha}(x+he_k)-u^{\alpha}(x))\tau_{x_i}
\right)\; dt\; dx.
\end{split}
\end{equation}
Abbreviating for the moment
$$
\xi^{\alpha}_i:= \tau(x)(u^{\alpha}_{x_i}(x+he_k)-u^{\alpha}_{x_i}(x)), \quad \eta^{\alpha}_i:=-2(u^{\alpha}(x+he_k)-u^{\alpha}(x))\tau_{x_i}(x)
$$
and
$$
\zeta:=tDu(x+he_k)+(1-t)Du(x),
$$
we can formally rewrite \eqref{start2} as
\begin{equation*}
\begin{split}
0&=\int_{\Omega}\int_0^1\sum_{i,j=1}^n \sum_{\alpha,\beta=1}^N \frac{\partial A^{\alpha}_i(\zeta)}{\partial \zeta^{\beta}_j}\xi^{\beta}_j\left(\xi^{\alpha}_i-\eta^{\alpha}_i\right)\; dt\; dx.
\end{split}
\end{equation*}
Thus, using \eqref{key-3}, we obtain (here $C$ is some constant depending only on $m,M,n,N,p,q$)
$$
\int_{\Omega}\int_0^1 (1+|\zeta|^2)^{\frac{p-2}{2}}|\xi|^2\; dt\; dx \le C\int_{\Omega}\int_0^1(1+|\zeta|^2)^{\frac{q-2}{2}}|\eta|^2\; dt \; dx,
$$
which in terms of original variables after division by $h^2$ means that
\begin{equation}
\begin{split}\label{huz}
&\int_{\Omega}\int_0^1 (1+|tDu(x+he_k)+(1-t)Du(x))|^2)^{\frac{p-2}{2}}\frac{|Du(x+he_k)-Du(x)|^2}{h^2}\tau^2(x)\; dt\; dx \\
&\le 4C\int_{\Omega}\int_0^1(1+|tDu(x+he_k)+(1-t)Du(x))|^2)^{\frac{q-2}{2}}\frac{|u(x+he_k)-u(x)|^2}{h^2}|D\tau(x)|^2\; dt \; dx.
\end{split}
\end{equation}

Finally, we let $h\to 0_+$. First, we focus on the limit in the term on the right hand side of \eqref{huz}.
In case that  $q\le 2$, we use the assumption that $u\in W^{1,2}_{\rm loc}(\Omega; \mathbb{R}^N)$ and therefore, we can use
the Lebesgue dominated convergence theorem to conclude that
$$
\begin{aligned}
\limsup_{h\to 0}&\int_{\Omega}\int_0^1(1+|tDu(x+he_k)+(1-t)Du(x))|^2)^{\frac{q-2}{2}}\frac{|u(x+he_k)-u(x)|^2}{h^2}|D\tau(x)|^2\; dt \; dx\\
&=\int_{\Omega}(1+|Du|^2)^{\frac{q-2}{2}}|u_{x_k}|^2|D\tau|^2 \; dx\le \int_{\Omega}(1+|Du|^2)^{\frac{q}{2}}|D\tau|^2 \; dx.
\end{aligned}
$$
Next, if $q>2$, we use the H\"{o}lder inequality, the assumption $u\in W^{1,q}_{\rm loc}(\Omega; \mathbb{R}^N)$
and the Lebesgue dominated convergence theorem to conclude
$$
\begin{aligned}
\limsup_{h\to 0}&\int_{\Omega}\int_0^1(1+|tDu(x+he_k)+(1-t)Du(x))|^2)^{\frac{q-2}{2}}\frac{|u(x+he_k)-u(x)|^2}{h^2}|D\tau(x)|^2\; dt \; dx\\
&=
\limsup_{h\to 0}\int_{\Omega}\int_0^1\left(((1+|tDu(x+he_k)+(1-t)Du(x))|^2)^{\frac{q-2}{2}}|D\tau(x)|^{2\frac{q-2}{q}}\right)\cdot \\
&\qquad
\cdot\frac{|u(x+he_k)-u(x)|^2}{h^2}|D\tau(x)|^{\frac{4}{q}}\; dt \; dx\\
&\le
\limsup_{h\to 0}\int_0^1 \left(\int_{\Omega}((1+|tDu(x+he_k)+(1-t)Du(x))|^2)^{\frac{q}{2}}|D\tau(x)|^{2}\; dx \right)^{\frac{q-2}{q}}\cdot \\
&\qquad
\left(\int_{\Omega}\frac{|u(x+he_k)-u(x)|^q}{h^q}|D\tau(x)|^{2} \; dx\right)^{\frac{2}{q}}\; dt\\
&\le \int_{\Omega}(1+|Du|^2)^{\frac{q}{2}}|D\tau|^2 \; dx.
\end{aligned}
$$

Consequently, substituting  these limits into \eqref{huz}, we have
\begin{equation}
\begin{split}\label{huz2}
&\limsup_{h\to 0}\int_{\Omega}\int_0^1 (1+|tDu(x+he_k)+(1-t)Du(x))|^2)^{\frac{p-2}{2}}\frac{|Du(x+he_k)-Du(x)|^2}{h^2}\tau^2(x)\; dt\; dx \\
&\le 4C\int_{\Omega}(1+|Du|^2)^{\frac{q}{2}}|D\tau|^2 \; dx.
\end{split}
\end{equation}
>From this estimate it immediately follows that $u\in W^{2,\min\{2,p\}}_{\rm loc}(\Omega; \mathbb{R}^N)$, in particular we know that $D^2u$
exists and that for almost all $x$
 $$
\frac{Du(x+he_k)-Du(x)}{h}\to (D^2 u)_{x_k}(x)
$$
where $(D^2 u)_{x_k}$ stands for  $\frac{\partial Du}{\partial x_k}$.
Therefore, we can use the Fatou lemma in \eqref{huz2} to conclude
\begin{equation*}
\begin{split}
&\int_{\Omega}(1+|Du|^2)^{\frac{p-2}{2}}|  D^2 u_{x_k}(x)|^2\tau^2\; dx\le 4C\int_{\Omega}(1+|Du|^2)^{\frac{q}{2}}|D\tau|^2 \; dx.
\end{split}
\end{equation*}
Since $k$ is arbitrary, the relation \eqref{exdopoH3-1} obviously follows. In addition, using the following algebraic inequality
$$
|DV(Du)|^2\le K(1+|Du|^2)^{\frac{p-2}{2}}|D^2u|^2,
$$
we see that \eqref{exdopoH3-V} holds as well. Hence the proof is complete.

\section{Proof of Theorem~\ref{t:main}}\label{P-t:main}

We shall start by recalling the definition of the Sobolev embedding exponent
\begin{equation}\label{2star} 2^{*}=\left\{\begin{array}{ll}\frac{2n}{n-2}&\text{if $n\ge 3$}\\
\text{arbitrary $>2$} & \text{if $n=2$}.\end{array}\right.\end{equation}
The value $2^*$ in dimension $n=2$ will be finally chosen sufficiently large. Since $u$ is assumed to be a weak solution belonging to $W^{1,\max\{q,2\}}_{\rm loc}(\Omega;\mathbb{R}^N)$, we can use Theorem~\ref{T:main-weak} and after summing \eqref{exdopoH3-V} and \eqref{exdopoH3-1}, we obtain the starting inequality valid for all $\tau \in \mathcal{C}^{\infty}_c (\Omega)$
\begin{equation}\label{split}
\int_{\Omega}\left((1+|Du|^2)^{\frac{p-2}{2}}|D^2u|^2\tau^2 + |DV(Du)|^2\tau^2\right) \; dx \le K\int_{\Omega}(1+|Du|^2)^{\frac{q}{2}}|D\tau|^2\; dx
\end{equation}

 Moreover, we remark that
\begin{equation}\label{e:Lq}
 \int_{\Omega}(1+|Du|^2)^{\frac{q}{2}}|D\tau|^2\; dx\le c\int_{\Omega}\left( (1+|Du|^2)^{\frac{p}{2}}+|V(Du)|^{\frac{2q}{p}}\right)|D\tau|^2\,dx.
\end{equation}
Indeed, in  $\{|Du|\le 1\}$ we have \[(1+|Du|^2)^{\frac{q}{2}}\le 2(1+|Du|^2)^{\frac{p}{2}}\]
and, in $\{|Du|> 1\}$,
\[(1+|Du|^2)^{\frac{q}{2}}= \left\{(1+|Du|^2)^{\frac{p-2}{2}}(1+|Du|^2)\right\}^{\frac{q}{p}}\le 2^{\frac{q}{p}}
|V(Du)|^{\frac{2q}{p}}.\]

Next, we split the proof for the case i) and ii).

\subsection{The case $q<p\frac{n+2}{n}$}
In this case, we first use the Sobolev embedding to conclude that (with some $C$ depending on $2^*$)
$$
\begin{aligned}
\|V(Du)\tau\|_{2^*}^2 &\le C\|D(V(Du)\tau)\|_2^2 \le 2C\int_{\Omega} \left(|DV(Du)|^2\tau^2 + |V(Du)|^2|D\tau|^2\right)\; dx \\
&\le 2C\int_{\Omega}\left(|DV(Du)|^2\tau^2 + (1+|Du|^2)^{\frac{p}{2}}|D\tau|^2\right)\; dx.
\end{aligned}
$$
Using this inequality in \eqref{split},
 and taking into account \eqref{e:Lq}
we get
\begin{equation}\label{split2}
\begin{split}
&\|V(Du)\tau\|_{2^*}^2+ \int_{\Omega}\left((1+|Du|^2)^{\frac{p-2}{2}}|D^2u|^2\tau^2 + |DV(Du)|^2\tau^2\right) \; dx \\
&\quad\le K_1\int_{\Omega}\left((1+|Du|^2)^{\frac{p}{2}}|D\tau|^2+(1+|Du|^2)^{\frac{q}{2}}|D\tau|^2\right)\; dx\\
&\quad\le K_2\int_{\Omega}\left((1+|Du|^2)^{\frac{p}{2}}|D\tau|^2+|V(Du)|^{\frac{2q}{p}}|D\tau|^2\right)\; dx.
\end{split}
\end{equation}
In particular, we have that $V(Du)\in L_{\rm loc}^{2^*}$.

Let us now estimate the last integral on the right hand side. Since $q\in (p,p\frac{2^*}{2})$, which follows from the assumption that $q<p \frac{n+2}{n}$ (note here that the value of $2^*$ in dimension $n=2$ has to be chosen greater   than $\frac{2q}{p}$), there exists a unique
$\theta\in (0,1)$ such that
$$
\frac{q}{2}=\frac{p}{2}(1-\theta)+\frac{p2^*}{4}\theta, \qquad \theta:=\frac{q-p}{p(\frac{2^*}{2}-1)}.
$$

As we will prove below,  under our assumptions on the exponents $p$ and $q$ and, if $n=2$,  with   a suitable choice of  $2^*$,  we have
\begin{equation}
2>2^*\theta. \label{asss}
\end{equation}
 Consider $\eta\in \mathcal{C}^{\infty}_c(\Omega)$ an arbitrary nonnegative cut-off function  and set
$$
\tau:=\eta^{\gamma} \qquad \textrm{with } \gamma:=\frac {2}{ 2-2^*\theta}.
$$
We have that
\begin{equation}\label{eta-tau}
\frac{|D\tau|^2}{\tau^{2^*\theta}}= \gamma^2\eta^{\gamma(2-2^*\theta)-2}|D \eta|^2= \gamma^2|D\eta|^2.
\end{equation}
Then by  the H\"older inequality, we have
\begin{equation*}
\begin{aligned}
\int_{\Omega}
|V(Du)|^{\frac{2q}{p}}|D\tau|^2\,dx&=\int_{\Omega}
|V(Du)|^{2(1-\theta)}(V(Du)\tau)^{2^*\theta} \frac{|D\tau|^2}{\tau^{2^*\theta}}\,dx\\
&\le \|V(Du)\|_2^{2(1-\theta)}\|V(Du)\tau\|_{2^*}^{2^*\theta} \left\|\frac{|D\tau|^2}{\tau^{2^*\theta}}\right\|_{\infty}
\end{aligned}
\end{equation*}
and we can apply the Young 's inequality to deduce that for arbitrary $\varepsilon>0$ we have
\begin{equation}\label{ssop}
\begin{aligned}
\int_{\Omega}
|V(Du)|^{\frac{2q}{p}}|D\tau|^2\,dx&\le \varepsilon \|V(Du)\tau\|_{2^*}^2 + C(\varepsilon,\gamma) \|V(Du)\|_2^{2(1-\theta)\gamma}
\left\|\frac{|D\tau|^2}{\tau^{2^*\theta}}\right\|^{\gamma}_{\infty}.
\end{aligned}
\end{equation}
Therefore, combining \eqref{split2}, \eqref{ssop}, \eqref{eta-tau}  and taking into account that
$V(Du)\le (1+|Du|^2)^{\frac{p}{4}}$, with a proper choice of $\varepsilon>0$, we obtain
\begin{equation*}
\begin{split}
&\|V(Du)\tau\|_{2^*}^2+\int_{\Omega}\left((1+|Du|^2)^{\frac{p-2}{2}}|D^2u|^2\eta^{2\gamma} + |DV(Du)|^2\eta^{2\gamma}\right) \; dx \\
&\quad\le K(\gamma,\|D \eta\|_{\infty}) \left(\int_{\Omega}(1+|Du|^2)^{\frac{p}{2}}\; dx\right)^{\tilde{q}},
\end{split}
\end{equation*}
with some power $\tilde{q}$ whose value depends on $p,q$ and $\gamma$.
>From this inequality the statement i) of Theorem~\ref{t:main} follows directly.

Now, we  check the validity of \eqref{asss}, which, by using of definition of $\theta$,
  it can be written as
\[q<2p\left(1-\frac{1}{2^*}\right).\]
If $n=2$, we can choose $2^*$ arbitrarily large, therefore in this case the condition \eqref{asss} reduces to $q<2p$, which is
exactly the assumption \eqref{ipotesip-q} for $n=2$. If $n\ge 3$ we have $2^*=2n/(n-2)$ and the above condition is then equivalent to
\[ q<p\frac{n+2}{n},\]
which is nothing else than the assumption \eqref{ipotesip-q}. Hence the proof of the statement i) is finished.

 \subsection{The case $q<p+2$ and $p<n$}
We again start to estimate the integral on the right hand side of \eqref{split}.
Using a simple inequality and the integration
by parts, we find that (here $K$ is again a generic constant depending only on $q$)
\begin{align}\nonumber
\int_{\Omega} &(1+|Du|^2)^{\frac{q}{2}} |D\tau|^2\; dx \le K+
\sum_{k=1}^n\sum_{\alpha=1}^N \int_{\Omega}(1+|Du|^2)^{\frac{q-2}{2}}
u^{\alpha}_{x_k}u^{\alpha}_{x_k}|D\tau|^2\,dx\\ \nonumber
&=  K-\sum_{k=1}^n\sum_{\alpha=1}^N \int_{\Omega}\left((1+|Du|^2)^{\frac{q-2}{2}}
u^{\alpha}_{x_k}|D\tau|^2 \right)_{x_k}u^{\alpha}\; dx\\
&\le K+K\|u\|_{\infty} \int_{\Omega}\left((1+|Du|^2)^{\frac{q-2}{2}}
|D^2 u||D\tau|^2 +(1+|Du|^2)^{\frac{q-1}{2}}|D\tau||D^2\tau|\right)\; dx.
\label{e:stima}\end{align}

Let us now set  $\tau:=\eta^{\gamma}$, $\gamma\ge 2$ to be chosen later, where $\eta\in \mathcal{C}^{\infty}_c(\Omega)$ is an arbitrary nonnegative cut-off function.

By  the Young 's inequality,
\begin{align*}\nonumber  &
K\|u\|_{\infty} \int_{\Omega}(1+|Du|^2)^{\frac{q-2}{2}}
|D^2 u||D\tau|^2\,dx
\\&=
\int_{ \Omega}
\left\{(1+|Du|^2)^{\frac{p-2}{4}}|D^2u|\tau
\right\}\left\{\|u\|_{\infty} (1+|Du|^2)^{\frac{2q-p-2}{4}}\frac{|D\tau|^2}{\tau}
\right\}
\,dx
\nonumber
\\&\le \varepsilon
\int_{\Omega}(1+|Du|^2)^{\frac{p-2}{2}}|D^2u|^2\tau^2\; dx
+ c_{\varepsilon,K} \|u\|^2_{\infty} \int_{\Omega}(1+|Du|^2)^{\frac{2q-p-2}{2}}\frac{|D\tau|^4}{\tau^2}\; dx.
\end{align*}
Therefore,
\begin{equation}\label{inter}
\begin{split}
&\int_{\Omega} (1+|Du|^2)^{\frac{q}{2}} |D\tau|^2\; dx \le \varepsilon \int_{\Omega}(1+|Du|^2)^{\frac{p-2}{2}}|D^2u|^2\tau^2\; dx \\
&\quad + K+K\|u\|^2_{\infty} \int_{\Omega}(1+|Du|^2)^{\frac{2q-p-2}{2}}\frac{|D\tau|^4}{\tau^2}\; dx +
K\|u\|_{\infty}\int_{\Omega}(1+|Du|^2)^{\frac{q-1}{2}}|D\tau||D^2\tau|\; dx,
\end{split}
\end{equation}
with a possibly different positive constant $K$ than before.

\medbreak
Let us now discuss first the case $q\in [p,p+1]$.

If $q$ belongs to this range, the above inequality immediately reduces to
\begin{equation*}
\begin{split}
&\int_{\Omega} (1+|Du|^2)^{\frac{q}{2}} |D\tau|^2\; dx \le \varepsilon \int_{\Omega}(1+|Du|^2)^{\frac{p-2}{2}}|D^2u|^2\tau^2\; dx \\
&\quad + K+K\|u\|^2_{\infty} \int_{\Omega}(1+|Du|^2)^{\frac{p}{2}}\frac{|D\tau|^4}{\tau^2}\; dx +K\|u\|_{\infty}\int_{\Omega}(1+|Du|^2)^{\frac{p}{2}}|D\tau||D^2\tau|\; dx.
\end{split}
\end{equation*}
 Let us now choose $\gamma=2$, that is  $\tau:=\eta^2$. Thus  we get
\begin{equation}\label{intera}
\begin{split}
&\int_{\Omega} (1+|Du|^2)^{\frac{q}{2}} |D\tau|^2\; dx \le \varepsilon \int_{\Omega}(1+|Du|^2)^{\frac{p-2}{2}}|D^2u|^2\tau^2\; dx \\
&\quad + K+C(\|u\|_{\infty}, \|\eta\|_{2,\infty})  \int_{\Omega}(1+|Du|^2)^{\frac{p}{2}}\; dx.
\end{split}
\end{equation}
Hence by \eqref{split} and taking a proper $\varepsilon>0$, so that we can absorb the first term on the right hand side in \eqref{intera} by the left hand side in \eqref{split}, it is not difficult to arrive to the statement ii) of Theorem~\ref{t:main} for $q\in [p,p+1]$.

\medbreak
Next, we focus on the case when $q\in (p+1,p+2)$.

There exist  $\theta_1,\theta_2\in (0,1)$ such that
\begin{align}
q-1&=p(1-\theta_1)+q\theta_1, && \theta_1:= \frac{q-1-p}{q-p},\\
2q-p-2&=p(1-\theta_2)+ q\theta_2, && \theta_2:=\frac{2(q-1-p)}{q-p}.
\end{align}
 In addition, considering $\tau=\eta^\gamma$ with
\begin{equation}\label{special}
  \gamma= \frac{2-\theta_2}{1-\theta_2},
\end{equation}
 we have that
$\frac{|D\tau|^{2-\theta_2}}{\tau}=\gamma |D\eta|^{2-\theta_2}$.

With this setting, we can now estimate the remaining integrals on the right hand side of \eqref{inter} by means of the H\"{o}lder inequality as follows
$$
\begin{aligned}
&\int_{\Omega}(1+|Du|^2)^{\frac{2q-p-2}{2}}\frac{|D\tau|^4}{\tau^2}\; dx = \int_{\Omega}\left((1+|Du|^2)^{\frac{q}{2}}|D\tau|^2\right)^{\theta_2}(1+|Du|^2)^{\frac{p(1-\theta_2)}{2}}\frac{|D\tau|^{4-2\theta_2}}{\tau^2}\; dx\\
&\quad\le  C\left\|\frac{|D\tau|^{4-2\theta_2}}{\tau^2}\right\|_{\infty}\|(1+|Du|)\|_p^{p(1-\theta_2)} \left(\int_{\Omega}(1+|Du|^2)^{\frac{q}{2}}|D\tau|^2\; dx\right)^{\theta_2}.
\end{aligned}
$$
Then the above estimate reduces to
\begin{equation}\label{godot}
\begin{aligned}
&\int_{\Omega}(1+|Du|^2)^{\frac{2q-p-2}{2}}\frac{|D\tau|^4}{\tau^2}\; dx \\
&\qquad \le  C(\theta_2,\|\eta\|_{1,\infty})\|(1+|Du|)\|_p^{p(1-\theta_2)} \left(\int_{\Omega}(1+|Du|^2)^{\frac{q}{2}}|D\tau|^2\; dx\right)^{\theta_2}.
\end{aligned}
\end{equation}

We proceed similarly also with the remaining integral in \eqref{inter}, i.e., using the H\"{o}lder inequality, we have
\begin{equation}\label{godot2}
\begin{aligned}
&\int_{\Omega}(1+|Du|^2)^{\frac{q-1}{2}}|D\tau||D^2\tau|\; dx\\ &\quad =\int_{\Omega}(1+|Du|^2)^{\frac{p(1-\theta_1)}{2}}\left((1+|Du|^2)^{\frac{q}{2}}|D\tau|^2\right)^{\theta_1}|D\tau|^{1-2\theta_1}|D^2\tau|\; dx\\
&\quad\le K\|(1+|Du|)\|_p^{p(1-\theta_1)} \||D\tau|^{1-2\theta_1}|D^2\tau|\|_{\infty} \left(\int_{\Omega}(1+|Du|^2)^{\frac{q}{2}}|D\tau|^2\; dx \right)^{\theta_1}\\
&\quad\le C(\|\tau\|_{2,\infty},\theta_2)\|(1+|Du|)\|_p^{p(1-\theta_1)} \left(\int_{\Omega}(1+|Du|^2)^{\frac{q}{2}}|D\tau|^2\; dx \right)^{\theta_1},
\end{aligned}
\end{equation}
where the last inequality follows from the fact that $1-2\theta_1=1-\theta_2>0$.

Finally, using \eqref{godot} and \eqref{godot2} in \eqref{inter}, keeping in mind the special choice of $\tau$ in \eqref{special} and applying the Young's  inequality (notice that $\theta_1,\theta_2<1$) we observe that
\begin{equation*}
\begin{split}
\int_{\Omega} (1+|Du|^2)^{\frac{q}{2}} |D\tau|^2\; dx &\le \varepsilon \int_{\Omega}(1+|Du|^2)^{\frac{p-2}{2}}|D^2u|^2\tau^2\; dx \\
&\quad + C(\varepsilon,\theta_2,\|\eta\|_{2,\infty}, \|u\|_{\infty}, \|u\|_{1,p}).
\end{split}
\end{equation*}
Thus, going back to \eqref{split}, choosing $\varepsilon>0$ sufficiently small to absorb the term involving the second derivatives by the left hand side, we finally get the statement ii) of Theorem~\ref{t:main}.

\section{Proof of Theorem~\ref{t:main2}}
%
%
%

In this final section we establish the existence of a weak solution to the
Dirichlet problem \eqref{dirichlet}, under the assumption \eqref{ipotesi-dato} on the boundary datum $u_0$; i.e.,
\[u_0 \in W^{1,r} (\Omega;\mathbb{R}^N), \qquad r:=\max\left\{2,p\frac{q-1}{p-1}\right\}.\]

 We use an approximation procedure.
 For arbitrary $\epsilon \in (0,1)$ we introduce the approximate problem ($\alpha=1,\ldots,N$)
\begin{equation}\label{systemepsilon}
\left\{\begin{array}{ll}\sum_{i=1}^n\frac{\partial }{\partial x_i}\left(A_{\epsilon,i}^{\alpha}(Du_{\epsilon})\right)=0
&\textrm{in } \Omega,\\
u_{\epsilon}=u_0 &\textrm{on }\partial \Omega,
\end{array}\right.
\end{equation}
where
$A_{\epsilon,i}^{\alpha}:  \mathbb{R}^{nN}\to
\mathbb{R}$  is defined as
\begin{equation}
\label{Aepsilon}A_{\epsilon,i}^{\alpha}(\xi):=A_{i}^{\alpha}(\xi)+\epsilon (1+|\xi|^2)^{\frac{\max\{q,2\}-2}{2}}\xi_i^{\alpha}.
\end{equation}

In addition, in case we deal with the statement ii) of the theorem, we shall require that
$u_0\in L^{\infty}(\partial \Omega)$.
%

Due to \eqref{crescitaai}, \eqref{casop>2} and \eqref{casop<2} we have that
$A_{\epsilon,i}^{\alpha}(\xi)$ satisfies the following properties:
\[\sum_{i=1}^n\sum_{\alpha=1}^N A_{\epsilon,i}^{\alpha}(\xi)\xi_i^{\alpha}\ge \epsilon |\xi|^{\max\{q,2\}}-\lambda,\]
\begin{equation}
\label{e:crescita-eps}
|A_{\epsilon,i}^{\alpha}(\xi)|\le M'(1+|\xi|)^{\max\{q,2\}-1}.
\end{equation}
for some positive $\lambda$ and $M'$  independent on $\epsilon$.
We can apply  the theory of monotone operators (see e.g.  \cite{lerlio65,browder,hart-stamp}) to prove the existence of a unique
solution  to \eqref{systemepsilon}, i.e., the existence of $u_{\epsilon}\in u_0+W^{1,\max\{q,2\}}(\Omega; \mathbb{R}^N)$ fulfilling
\begin{equation}
\int_{\Omega}\sum_{\alpha=1}^N \sum_{i=1}^n{A_{\epsilon,i}^{\alpha}}(Du_{\epsilon})\varphi_{x_i}^\alpha\,dx=0\qquad \forall
\varphi\in W_0^{1,\max\{q,2\}}(\Omega;\mathbb{R}^N).
  \label{firstvariation}
\end{equation}

\subsection{First a~priori estimates}
We now derive  estimates for $u_{\epsilon}$  independent of $\epsilon$.

Using  $\varphi:=u_{\epsilon}-u_0$ as a test function in \eqref{firstvariation}, we get
 \begin{equation}\label{e:begin}
\begin{split}
0&=\int_{\Omega}\sum_{\alpha=1}^N \sum_{i=1}^n{A_{\epsilon,i}^{\alpha}}(Du_{\epsilon})((u_{\epsilon})_{x_i}^{\alpha}-(u_0)_{x_i}^{\alpha})\; dx\\
&=\int_{\Omega}\sum_{\alpha=1}^N \sum_{i=1}^n\left\{{A_{i}^{\alpha}}(Du_{\epsilon})((u_{\epsilon})_{x_i}^{\alpha}-(u_0)_{x_i}^{\alpha})+
\epsilon(1+|Du_{\epsilon}|^2)^{\frac{\max\{q,2\}-2}{2}} (u_{\epsilon})^{\alpha}_{x_i}((u_{\epsilon})_{x_i}^{\alpha}-(u_0)_{x_i}^{\alpha})\right\}\; dx\\
&\overset{\eqref{dis-ellitticita}}\ge \int_{\Omega}\left(K^{-1}|Du_{\epsilon}|^p -(1+|Du_0|^2)^{\frac{p(q-1)}{2(p-1)}} +
\epsilon(1+|Du_{\epsilon}|^2)^{\frac{\max\{q,2\}-2}{2}}|Du_{\epsilon}|(|Du_{\epsilon}|-|Du_0|)\right)\; dx.
\end{split}
\end{equation}

Since
\begin{equation*}
\begin{split}(1+|Du_{\epsilon}|^2)^{\frac{\max\{q,2\}-2}{2}}|Du_{\epsilon}|(|Du_{\epsilon}|-|Du_0|)=&
(1+|Du_{\epsilon}|^2)^{\frac{\max\{q,2\}-2}{2}}|Du_{\epsilon}|^2\\ &-(1+|Du_{\epsilon}|^2)^{\frac{\max\{q,2\}-2}{2}}|Du_{\epsilon}||Du_0|,\end{split}
\end{equation*}
then \eqref{e:begin} implies
\begin{align}\nonumber &\int_{\Omega}\left(|Du_{\epsilon}|^p  +\epsilon
(1+|Du_{\epsilon}|^2)^{\frac{\max\{q,2\}-2}{2}}|Du_{\epsilon}|^2\right)\; dx\\
&\le c
\int_{\Omega}\left((1+|Du_0|^2)^{\frac{p(q-1)}{2(p-1)}}+\epsilon(1+|Du_{\epsilon}|^2)^{\frac{\max\{q,2\}-2}{2}}|Du_{\epsilon}||Du_0|\right) \,dx.
\label{e:begin2}\end{align}
We claim that \eqref{e:begin2} implies
\begin{equation} \int_{\Omega}\left(|Du_{\epsilon}|^p  + \frac{\epsilon}{2}
(1+|Du_{\epsilon}|^2)^{\frac{\max\{2,q\}-2}{2}}|Du_{\epsilon}|^2\right)\; dx\le c
\int_{\Omega}(1+|Du_0|^2)^{\frac{r}{2}}\,dx
\label{goal}
\end{equation}

 If $q\le 2$, we can conclude using Young's inequality with exponent $\frac12$ on the last  term in \eqref{e:begin2}:
\begin{align*}&|Du_{\epsilon}||Du_0|\le \frac{1}{2c} |Du_{\epsilon}|^2+c' |Du_0|^2.
\end{align*}
Therefore, recalling that
 $r=\max\{2,\frac{p(q-1)}{p-1}\}$ the inequality \eqref{goal} follows.

Otherwise, if  $q>2$, the last term in \eqref{e:begin2} can be estimate as follows:
\begin{equation}
\epsilon(1+|Du_{\epsilon}|^2)^{\frac{\max\{q,2\}-2}{2}}|Du_{\epsilon}||Du_0|
\le  \epsilon\left\{c(1+|Du_0|^2)^{\frac{r}{2}}+c(1+|Du_{\epsilon}|^2)^{\frac{q-2}{4}} |Du_{\epsilon}|^{\frac{q}{2}}
|Du_0|\right\}.\label{e:second}
 \end{equation}
Indeed, in $\{|Du_{\epsilon}|\le 1\}$ we have  \[(1+|Du_{\epsilon}|^2)^{\frac{\max\{q,2\}-2}{2}}|Du_{\epsilon}||Du_0|\le 2^{\frac{q-2}{2}}|Du_0|\le c(1+|Du_0|^2)^{\frac{r}{2}}\]
and, in $\{|Du_{\epsilon}|> 1\}$,
 \[(1+|Du_{\epsilon}|^2)^{\frac{\max\{q,2\}-2}{2}}|Du_{\epsilon}||Du_0|\le
 2^{\frac{q-2}{4}}(1+|Du_{\epsilon}|^2)^{\frac{q-2}{4}}|Du_{\epsilon}|^{\frac{q}{2}}|Du_0|\]
and \eqref{e:second} follows.

To estimate the last term in \eqref{e:second}, we use   Young's inequality with exponents $2, q ,\frac{2q}{q-2}$. Recalling that $\epsilon<1$, we have
\begin{align}\nonumber & \epsilon c(1+|Du_{\epsilon}|^2)^{\frac{q-2}{4}} |Du_{\epsilon}|^{\frac{q}{2}}
|Du_0|=\epsilon c\left\{(1+|Du_{\epsilon}|^2)^{\frac{q-2}{4}} |Du_{\epsilon}|\right\} |Du_{\epsilon}|^{\frac{q-2}{2}}
|Du_0|\\ &\le\nonumber  \frac{\epsilon}{8}
(1+|Du_{\epsilon}|^2)^{\frac{q-2}{2}}|Du_{\epsilon}|^2+
\frac{\epsilon}{8}|Du_{\epsilon}|^{q}
+
\epsilon c|Du_0|^q
\\ &\le
\frac{\epsilon}{4}
(1+|Du_{\epsilon}|^2)^{\frac{\max\{q,2\}-2}{2}}|Du_{\epsilon}|^2
+
c(1+|Du_0|^2)^{\frac{r}{2}}
\label{e:fourth}\end{align}
with $c$ independent of $\epsilon$.
Therefore, collecting
\eqref{e:begin2}, \eqref{e:second}  and \eqref{e:fourth}, the inequality \eqref{goal} follows also in the case
$q>2$.

Thus,   we can find a universal constant $C>0$ such that (using also the Poincar\'{e} inequality)
\begin{equation}
\|u_{\epsilon}\|_{1,p} + \epsilon \|u_{\epsilon}\|^{\max\{q,2\}}_{1,\max\{q,2\}}\le C. \label{f-ap}
\end{equation}

If the assumption \eqref{structure} holds, then for every $\alpha\in \{1,\ldots, N\}$ we have
\begin{equation}
\label{BulHLP}
\sum_{i=1}^nA_{\epsilon,i}^{\alpha}(\xi)\xi_i^{\alpha}\ge \epsilon (1+|\xi|^{2})^{\frac{\max\{2,q\}-2}{2}}|\xi^{\alpha}|^2\ge  \epsilon |\xi^{\alpha}|^{\max\{q,2\}}
\end{equation}
and \[|A_{\epsilon,i}^{\alpha}(\xi)|\le (K+1)(1+|\xi|^{2})^{\frac{\max\{2,q\}-1}{2}},\]
where $K$ is as in \eqref{crescitaai}. Next we denote $\tilde{M}:=\|u_0\|_{L^{\infty}(\partial \Omega)}$ and define
$$
\varphi^{\alpha}:=\max \{u_{\epsilon}^{\alpha}-\tilde{M},0\} \qquad \alpha\in \{1,\ldots, N\}.
$$
Evidently, $\varphi=(\varphi^1,\ldots,\varphi^N) \in W^{1,\max \{2,q\}}_0(\Omega;\mathbb{R}^N)$ and
can be used as a test function in \eqref{firstvariation}. Doing so, and using the definition of $\varphi$ we obtain
(here $\chi_{u_{\epsilon}^{\alpha}\ge \tilde{M}}$ denotes the characteristic function of the set, where $u^{\alpha}_{\epsilon}\ge \tilde{M}$)
\begin{equation}
0=\int_{\Omega}\sum_{\alpha=1}^N \sum_{i=1}^n{A_{\epsilon,i}^{\alpha}}(Du_{\epsilon})\varphi_{x_i}^\alpha\,dx=\int_{\Omega}\sum_{\alpha=1}^N
\sum_{i=1}^n{A_{\epsilon,i}^{\alpha}}(Du_{\epsilon})Du_{\varepsilon}^{\alpha}\chi_{u_{\epsilon}^{\alpha}\ge \tilde{M}}\,dx
  \label{firstvariationM}
\end{equation}
Using finally \eqref{BulHLP}, we see that
\begin{equation}
\begin{split}
0&=\int_{\Omega}\sum_{\alpha=1}^N \sum_{i=1}^n{A_{\epsilon,i}^{\alpha}}(Du_{\epsilon})Du_{\varepsilon}^{\alpha}\chi_{u_{\epsilon}^{\alpha}\ge \tilde{M}}\,dx\ge \epsilon
\int_{\Omega}\sum_{\alpha=1}^N |Du_{\epsilon}^{\alpha}|^{\max\{q,2\}}\chi_{u_{\epsilon}^{\alpha}\ge \tilde{M}}\,dx\\
&=\epsilon\int_{\Omega}\sum_{\alpha=1}^N |D\varphi^{\alpha}|^{\max\{q,2\}}\,dx.
  \label{firstvariationMM}
\end{split}
\end{equation}
Consequently, $\varphi$ is a constant function. Since it has zero trace, it must be identically zero and it directly follows from its definition that $u_{\varepsilon}^{\alpha}\le \tilde{M}=\|u_0\|_{L^{\infty}(\partial \Omega)}$ for all $\alpha\in \{1,\ldots, N\}$. The minimum principle can be obtained by repeating step by step the above procedure for a test function defined as
$$
\varphi^{\alpha}:=\min \{u_{\epsilon}^{\alpha}+\tilde{M},0\} \qquad \alpha\in \{1,\ldots, N\}.
$$
Therefore, we conclude that, for every $\epsilon\in (0,1)$,
\begin{equation}\label{apest3}
\|u_{\epsilon}\|_{L^{\infty}(\Omega)}\le \|u_0\|_{L^{\infty}(\partial \Omega)}.
\end{equation}

\subsection{Uniform higher order estimates}

Due to the proof of a~priori estimates we can use Theorem~\ref{T:main-weak} to get the existence of the second order   derivatives of $u_{\epsilon}$, but  with
their estimates depending on $\epsilon$. Nevertheless, we can repeat step by step the estimates in Theorem~\ref{T:main-weak}
to get the following inequality
\begin{equation}\label{hot}
\begin{aligned}
&\int_{\Omega}(1+|Du_{\epsilon}|^2)^{\frac{p-2}{2}}|D^2u_{\epsilon}|^2\tau^2\; dx \le c\int_{\Omega}(1+|Du_{\epsilon}|^2)^{\frac{q}{2}}|D\tau^2|\; dx \\
&\qquad -c\epsilon\int_{\Omega}\sum_{i,k=1}^n\sum_{\alpha=1}^N
\left((1+|Du_{\epsilon}|^2)^{\frac{\max\{q,2\}-2}{2}}(u_{\epsilon})^{\alpha}_{x_i}\right)_{x_k} \left((u_{\epsilon})^{\alpha}_{x_k}\tau^2 \right)_{x_i}\; dx
\end{aligned}
\end{equation}
for every $\tau\in \mathcal{C}_c^{\infty}(\Omega)$.
 Thus, we need to bound uniformly the last integral. By a rather standard manipulation  and using the Young inequality, it is not difficult to check that
$$
\begin{aligned}
&\sum_{i,k=1}^n\sum_{\alpha=1}^N \left((1+|Du_{\epsilon}|^2)^{\frac{\max\{q,2\}-2}{2}}(u_{\epsilon})^{\alpha}_{x_i}\right)_{x_k} \left((u_{\epsilon})^{\alpha}_{x_k}\tau^2 \right)_{x_i}\\
&\qquad \ge (1+|Du_{\epsilon}|^2)^{\frac{\max\{q,2\}-2}{2}}|D^2u_{\epsilon}|^2\tau^2 - 2\max\{q,2\} (1+|Du_{\epsilon}|^2)^{\frac{\max\{q,2\}-2}{2}}|D^2u_{\epsilon}|\tau |Du_{\epsilon}| |D\tau|\\
&\qquad \ge -C(1+|Du_{\epsilon}|^2)^{\frac{\max\{q,2\}}{2}}|D\tau|^2
\end{aligned}
$$  with $C$ independent of $\epsilon$.
Substituting this into \eqref{hot}, we derive
\begin{equation}\label{hot2}
\begin{aligned}
&\int_{\Omega}(1+|Du_{\epsilon}|^2)^{\frac{p-2}{2}}|D^2u_{\epsilon}|^2\tau^2\; dx \le c\int_{\Omega}(1+|Du_{\epsilon}|^2)^{\frac{q}{2}}|D\tau|^2\; dx\\
&\qquad  +c\epsilon \int_{\Omega}(1+|Du_{\epsilon}|^2)^{\frac{\max\{q,2\}}{2}}|D\tau|^2\; dx\\
&\qquad \overset{\eqref{f-ap}}\le c +c\int_{\Omega}(1+|Du_{\epsilon}|^2)^{\frac{q}{2}}|D\tau|^2\; dx.
\end{aligned}
\end{equation}
Hence, we are in the same starting position as in the proof of Theorem~\ref{t:main} and due to uniform ($\epsilon$-independent) uniform bounds \eqref{f-ap} and \eqref{apest3}, we deduce that for arbitrary open $\Omega'\subset \overline{\Omega'} \subset \Omega$,
\begin{equation}
\int_{\Omega'}\left(|Du_{\epsilon}|^{q}+|D V(Du_{\epsilon})|^2+ (1+|Du_{\epsilon}|^2)^{\frac{p-2}{2}}|D^2u_{\epsilon}|^2\right)\, dx \le C(\Omega', u_0).
\label{exdopoH3-V2ns}
\end{equation}
Further, it is then not difficult to observe with the help of the H\"{o}lder inequality that
\begin{equation}
\int_{\Omega'}|D^2u_{\epsilon}|^{\min\{2,p\}}\, dx \le C(\Omega', u_0).
\label{exdopoH3-V2ns3}
\end{equation}

\subsection{Limit $\epsilon \to 0$} Using the uniform bounds \eqref{f-ap}, \eqref{exdopoH3-V2ns} and \eqref{exdopoH3-V2ns3}, the compact Sobolev embedding and the diagonal procedure,
we can find a subsequence, that we do not relabel, and it exists \[u\in (u_0+W^{1,p}(\Omega; \mathbb{R}^N))\cap W_{\rm loc}^{1,q}(\Omega; \mathbb{R}^N) \]
such that for arbitrary open $\Omega'\subset \overline{\Omega'} \subset \Omega$, we have
\begin{align}
u^{\epsilon}&\rightharpoonup u &&\textrm{weakly in }W^{1,p}(\Omega; \mathbb{R}^N),\label{co1}\\
u^{\epsilon}&\rightharpoonup u &&\textrm{weakly in }W^{1,q}(\Omega'; \mathbb{R}^N),\label{co2}\\
Du^{\epsilon}&\to Du &&\textrm{strongly in }L^p(\Omega; \mathbb{R}^N),\label{co3}\\
Du^{\epsilon}&\to Du &&\textrm{almost everywhere in }\Omega,\label{co4}\\
\epsilon (1+|Du^{\epsilon}|^2)^{\frac{\max(2,q)-2}{2}}Du^{\epsilon}&\to 0 &&\textrm{strongly in }L^1(\Omega'; \mathbb{R}^{nN}).\label{co5}
\end{align}
Having \eqref{co1}--\eqref{co5}, it is easy to let $\epsilon \to 0$ in
\eqref{firstvariation} with arbitrary $\varphi \in \mathcal{C}^{\infty}_c(\Omega; \mathbb{R}^N)$ to deduce \eqref{soldebole-Dirichlet} for the same class of $\varphi$'s. The density result then leads to the validity of \eqref{soldebole-Dirichlet} in the full generality. This finishes the proof.


\end{document}